\documentclass[11pt]{amsart}
\usepackage[margin=1in]{geometry}
\usepackage{amsmath, amsfonts, amssymb, amsthm}
\usepackage{amsrefs}
\usepackage{mathrsfs}
\usepackage{enumitem} 
\usepackage{hyperref}
\usepackage{color}
\usepackage{dsfont}
\usepackage{csquotes}
\usepackage{epigraph}
\usepackage{comment}
\usepackage{mlmodern}
\usepackage[T1]{fontenc}

\usepackage{color}
\definecolor{darkblue}{rgb}{0.0,0.0,0.3}
\hypersetup{
    colorlinks=false,
    linkcolor=blue,
    urlcolor=darkblue,
    }

\newtheorem{theorem}{Theorem}[section]

\newtheorem{conjecture}[theorem]{Conjecture}

\newtheorem{lemma}[theorem]{Lemma}

\theoremstyle{definition}

\newtheorem{definition}[theorem]{Definition}

\newtheorem{notation}[theorem]{Notation}

\theoremstyle{remark}
\newtheorem{remark}[theorem]{Remark}

\numberwithin{equation}{section}


\newcommand{\bR}{\mathbb{R}}

\newcommand{\cD}{\mathcal{D}}

\newcommand{\Ampere}{Amp\`{e}re}

\newcommand\norm[1]{\left\lVert#1\right\rVert}

\DeclareMathOperator{\diam}{diam}

\DeclareMathOperator{\const}{const}

\title[The prescribed curvature equations in Minkowski space]{The Dirichlet problem for the prescribed curvature equations in Minkowski space}
\author{Bin Wang}
\address[]{Department of Mathematics, the Chinese University of Hong Kong, Shatin, New Territories, The Hong Kong Special Administrative Region of the People's Republic of China.} 
\email{bwang@math.cuhk.edu.hk; bwangmath@outlook.com}
\subjclass[2020]{Primary 53C50, 53C21; Secondary 35B45, 35J60}
\keywords{Spacelike hypersurfaces in the Minkowski space, prescribed curvature equations, curvature estimates, fully nonlinear elliptic PDEs.}

\begin{document}

\setcounter{tocdepth}{1} 
\begin{abstract}
We study the Dirichlet problem for functions whose graphs are spacelike hypersurfaces with prescribed curvature in the Minkowski space and we obtain some new interior second order estimates for admissible solutions to the corresponding fully nonlinear elliptic partial differential equations.
\end{abstract}
\maketitle
\tableofcontents

\section{Introduction}
The Dirichlet problem for the prescribed curvature equations in the Minkowski space is a physically motivated problem due to its applications in the theory of relativity. However, the problem has not been fully explored in the literature compared to its Euclidean counterparts because the geometric nature of the ambient space causes substantial difficulties for deriving a priori estimates of admissible solutions. Hence, major advancements for this problem have stopped since the work of P.~Bayard \cite{Bayard} and J.~Urbas \cite{Urbas} in 2003, and only until very recent years C.~Ren and Z.~Wang \cite{Ren-Wang-1, Ren-Wang-2, Wang} have made some important progress. In this note, we continue the investigation from the limited literature and obtain some new interior curvature estimates which also improve those already remarkable ones due to J.~Urbas \cite{Urbas} and Z.~Wang \cite{Wang}. A more detailed literature review will soon be given below and our main results are stated in Theorem \ref{k=2} and Theorem \ref{semi-convex}.

\begin{remark}
For literature on the Euclidean case, the reader may be referred to the work of Guan-Ren-Wang \cite{Guan-Ren-Wang} and Sheng-Urbas-Wang \cite{Sheng-Urbas-Wang}; see also the very delicate work \cite{Guan-Spruck} of Guan-Spruck in hyperbolic space.
\end{remark}

Let $\sigma_k: \bR^n \to \bR$ denote the $k$-th elementary symmetric polynomial, which is defined as
\[\sigma_k(\kappa_1,\ldots,\kappa_n)=\sum_{1 \leq i_1<i_2<\cdots<i_k \leq n} \kappa_{i_1}\cdots \kappa_{i_k}.\] We consider a fully nonlinear elliptic equation of the form
\[F[u]:=f(\kappa_1(x),\ldots,\kappa_n(x))=\psi(x,u (x)), \quad x \in \Omega,\] in a smooth bounded domain $\Omega \subseteq \bR^n$, where $f$ is a smooth symmetric function of $n$ variables given by $\sigma_k$, $\psi$ is a prescribed positive function and $\kappa=(\kappa_1,\ldots,\kappa_n)$ denotes the principal curvatures of the graph of $u$ over $\Omega$. 

The study of this class of fully nonlinear elliptic equations were initiated by Caffarelli-Nirenberg-Spruck \cite{CNS-3,CNS-4,CNS-5} and Ivochkina \cite{Ivochkina-1, Ivochkina-2, Ivochkina-3}, and further developed by Trudinger-Wang \cite{TW-1,TW-2,TW-3}. Indeed, the equation operator $\sigma_k$ includes a large class of notable examples. For a smooth hypersurface $\Sigma$ with principal curvatures $\kappa[\Sigma]=(\kappa_1,\ldots,\kappa_n)$,  the quantity $\sigma_k(\kappa[\Sigma])$ will be called the $k$-th mean curvature (or just $k$-curvature) of $\Sigma$. In particular, $\sigma_1(\kappa[\Sigma])=\sum_{i=1}^{n} \kappa_i$ is the usual mean curvature, $\sigma_2(\kappa[\Sigma])=\sum_{i<j}\kappa_i\kappa_j$ is the scalar curvature, and $\sigma_n(\kappa[\Sigma])=\kappa_1\cdots\kappa_n$ is the Gauss curvature. The other values of $k$ also pertain to some important geometric problems in the sense that they can be reduced to solving some particular $\sigma_k$ type equations; see e.g. \cite{Guan-Guan} and \cite{Guan-Li-Li, Yang}. 

The aim of this note is to seek a $k$-admissible function $u$ such that its graph $\Sigma=(x,u(x))$ over $\overline{\Omega}$ is a spacelike hypersurface in the Minkowski space and solves the following Dirichlet problem
\begin{gather} \label{Dirichlet problem}
\begin{split}
       \sigma_k[u]&=\psi(x,u)\quad  \text{in $\Omega$},\\
                u&=\varphi \ \quad\quad\quad     \text{on $\partial\Omega$},
\end{split}
\end{gather} where the meaning of being spacelike and $k$-admissible are specified in section \ref{Preliminaries}. 

The problem was first studied by Robert Bartnik and Leon Simon in their influential paper \cite{Bartnik-Simon} for the case of prescribing mean curvature i.e. when $k=1$ in \eqref{Dirichlet problem}; see also the extension by Gerhardt \cite{Gerhardt-1} to non-flat spacetimes. The motivations mainly came from Einstein's theory of relativity, as first emphasized in the fundamental paper of Lichnerowicz \cite{L}. Roughly speaking, spacelike hypersurfaces of constant mean curvature in the Minkowski space are important because they provide Riemannian submanifolds with properties which reflect those of the spacetime. In particular, they played a role in the initial proof \cite{Schoen-Yau} of the positive mass conjecture by Schoen and Yau. On the other hand, the study of such hypersurfaces was already posed by Calabi \cite{Calabi} at an earlier time for the quest of understanding its Bernstein type property; see the resolution of this problem by Cheng-Yau \cite{Cheng-Yau} and references citing their papers.

Later, \eqref{Dirichlet problem} was solved by Delano\"{e} \cite{Delanoe} for $k=n$ i.e. the prescribed Gauss curvature equation; see also the work of Guan \cite{Guan} in which the result was improved under a subsolution condition. The next interesting case would naturally be to solve the prescribed scalar curvature equation i.e. when $k=2$ in \eqref{Dirichlet problem}. Bayard \cite{Bayard} was the first to tackle the problem and proved the solvability in dimensions three and four. Soon after that, Urbas \cite{Urbas} extended this result to all dimensions, however, Urbas' proof relied crucially on the additional assumption that both the boundary data $\varphi$ and the domain are uniformly convex in the sense that $D^2\varphi \geq c_0I$ in $\overline{\Omega}$ for some uniform constant $c_0>0$ and the principal curvatures of $\partial \Omega$ are bounded from below by a positive uniform constant. Our first main result is the removal of this assumption and the following improved existence theorem is obtained.

\begin{theorem} \label{k=2}
Let $\Omega \subseteq \bR^n$ be a convex $1$-admissible bounded domain with a smooth boundary. Suppose $\psi \in C^{\infty}(\overline{\Omega} \times \bR)$ is a positive function satisfying $\psi_u \geq 0$ and $\varphi \in C^4(\overline{\Omega})$ is spacelike. Assume the existence of a $2$-admissible subsolution $\underline{u}$ such that
\begin{align*}
\sigma_2[\underline{u}]&\geq \psi(x,\underline{u})\quad  \text{in $\Omega$},\\
                \underline{u}&=\varphi \ \quad\quad\quad     \text{on $\partial\Omega$}.
\end{align*} Then there exists a unique $2$-admissible solution $u$ to 
\begin{gather} \label{Dirichlet problem k=2}
\begin{split}
\sigma_2[u]&=\psi(x,u) \quad  \text{in $\Omega$},\\
                u&=\varphi \ \quad\quad\quad     \text{on $\partial\Omega$},
\end{split}
\end{gather}
belonging to $C^{3,\alpha}(\overline{\Omega})$ for any $\alpha \in (0,1)$.
\end{theorem}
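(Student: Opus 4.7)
My plan is the standard continuity method: embed \eqref{Dirichlet problem k=2} into a one-parameter family $\sigma_2[u^t]=\psi^t(x,u^t)$ in $\Omega$ with $u^t=\varphi$ on $\partial\Omega$, where $\psi^t$ interpolates smoothly between a starting datum (chosen so that $u^0=\underline{u}$ is a solution) and $\psi(x,u)$, while preserving $\psi^t_u\ge 0$ and the subsolution property along the path. The set of $t\in[0,1]$ for which a $2$-admissible $C^{3,\alpha}$ solution exists contains $0$ and is open by the implicit function theorem, since the linearized operator is elliptic on the $2$-admissible cone and $\psi^t_u\ge 0$ makes it invertible with zero Dirichlet data. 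Closedness is the analytic heart and reduces to $t$-uniform bounds $\|u^t\|_{C^{2,\alpha}(\overline{\Omega})}\le C$. Uniqueness follows from the concavity of $\sigma_2^{1/2}$ on the admissible cone together with $\psi_u\ge 0$.

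\textbf{Lower order estimates.} The $C^0$ bound has a lower half from $u\ge\underline{u}$ (comparison, using $\psi_u\ge 0$) and an upper half from $\varphi$ via the maximum principle. For $C^1$, admissibility forces the graph to be spacelike, so $|Du|<1$ pointwise; to promote this to a quantitative bound $|Du|\le 1-\delta_0$ I would first obtain a boundary gradient estimate from $\underline{u}\le u$ with equality on $\partial\Omega$ together with the $1$-admissibility (mean-convexity) of $\Omega$, which lets one sandwich $u$ between $\underline{u}$ and a linear barrier and hence bound $\partial_\nu u$ strictly below $1$; then propagate inside via the Bartnik--Simon style gradient maximum principle for spacelike graphs satisfying $\sigma_k=\psi$.

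\textbf{Second order estimates and conclusion.} The interior Hessian bound is furnished by the new interior curvature estimate established elsewhere in this paper: for $2$-admissible spacelike solutions one has $\sup_\Omega|D^2 u|\le C\bigl(1+\sup_{\partial\Omega}|D^2 u|\bigr)$. The whole problem therefore sits in the boundary Hessian estimate, which is exactly the reason Urbas had to assume uniform convexity of $\varphi$ and $\partial\Omega$. Without those assumptions I would follow the subsolution barrier strategy of Guan: the pure tangential components are read off the identity $u-\varphi\equiv 0$ on $\partial\Omega$ once $\partial_\nu u$ is controlled; the mixed tangential--normal components are bounded by testing a barrier of the form $\Psi=A(\underline{u}-u)+B|x-x_0|^2+\text{linear correction}$ on a boundary half-ball, carefully exploiting the favorable sign of $F^{ij}(\underline{u}-u)_{ij}$ coming from concavity of $\sigma_2^{1/2}$ and the strict ellipticity on the admissible cone; the pure double normal component is then recovered from $\sigma_2[u]=\psi>0$ together with the already controlled tangential minor, since in a $2$-admissible matrix one eigenvalue cannot escape to $+\infty$ while the others stay bounded and $\sigma_2$ is bounded. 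The hardest step, and the central obstacle, is the mixed estimate in the absence of strict convexity of the data, and this is precisely where the subsolution hypothesis takes over the role previously played by uniform convexity. Once $\|u^t\|_{C^2(\overline{\Omega})}$ is under control the equation is uniformly elliptic with concave structure, so Evans--Krylov and Krylov's boundary regularity yield $C^{2,\alpha}(\overline{\Omega})$, and Schauder bootstrapping upgrades to $C^{3,\alpha}(\overline{\Omega})$, closing the continuity method.
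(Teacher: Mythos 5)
Your overall skeleton — continuity method plus $t$-uniform $C^{2,\alpha}$ bounds, uniqueness from concavity of $\sigma_2^{1/2}$ and $\psi_u\ge 0$ — is exactly how the paper assembles Theorem \ref{k=2}, and your identification of the Section \ref{k=2 proof} interior curvature estimate as the key new ingredient is correct. Two places where you deviate from, or fall short of, the paper's route deserve flagging. First, the upper half of the $C^0$ bound: $\varphi$ itself is not a supersolution of $\sigma_2=\psi$, so it cannot serve as the upper barrier by a bare maximum principle. The paper instead invokes Bartnik--Simon to produce a $1$-admissible solution $\overline{u}$ of the prescribed mean curvature equation $\sigma_1[\overline u]=n\bigl[\binom{n}{k}^{-1}\psi(x,\overline u)\bigr]^{1/k}$ with data $\varphi$, and then uses Maclaurin's inequality $\sigma_1[u]\ge n\,(H_k[u])^{1/k}$ to compare, giving $\underline u\le u\le\overline u$. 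Second, you set out to prove the boundary Hessian estimate yourself via a Guan-type subsolution barrier, declaring it ``the whole problem.'' In fact the paper does not prove a new boundary $C^2$ estimate at all; it cites Bayard's Section 4.2, which already covers $k=2$ with general $\varphi$, and locates the novelty entirely in the interior estimate (Urbas needed uniform convexity there, via the inequality $F^{ij}\nabla_i\nabla_j\eta\ge c_0\sum F^{ii}-C$ and $\sum F^{ii}\ge C\kappa_1$, cf.\ Remark \ref{cruciality of uniform convexity}). Your barrier sketch is not wrong in spirit, but it glosses over the fact that in the Minkowski setting $F^{ij}$ is taken with respect to $h_{ij}=\tfrac1w\gamma^{ik}u_{kl}\gamma^{lj}$ rather than $u_{ij}$, so the ``favorable sign of $F^{ij}(\underline u-u)_{ij}$'' is not immediate, and your double-normal argument (``one eigenvalue cannot escape while $\sigma_2$ is bounded'') requires a quantitative lower bound on $\sigma_1(\kappa|n)$ which needs its own barrier; both of these are nontrivial and are exactly the content of Bayard's proof. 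Citing that result, as the paper does, is both cleaner and necessary to keep the argument honest.
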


The next goal is to continue the investigation for the remaining cases $3 \leq k \leq n-1$ and we shall proceed by the standard continuity method along with the regularity theorem due to Evans-Krylov \cite{Evans,Krylov}; this requires us to establish a priori estimates for admissible solutions up to the second order. In fact, the $C^0$ estimate follows directly from the comparison principle and the $C^1$ estimate has been successfully obtained by Bayard \cite[Proposition 3.1]{Bayard} for all $k$. For the second order derivatives on the boundary $\partial \Omega$, Bayard \cite[Section 4.2]{Bayard} proved the bound for all $k$ if $\varphi=\const$ and for a general boundary data, the condition $k=2$ had to be imposed. Although the boundary $C^2$ estimate has not been obtained in the most general case, it is good enough for the moment. The real issue is the second order estimate inside $\Omega$ which was not known for $3 \leq k \leq n-1$ even in some very simple cases.

The major difficulties come from two aspects. First, the operator $\sigma_k$ is a lot more structurally complicated when $3 \leq k \leq n-1$. As a comparison, when $k=1$ the equation is quasilinear and when $k=n$ the equation is of Monge-\Ampere\ type, both of which are extensively studied in the literature and many techniques could be adapted to our setting. Even when $k=2$, there are structural advantages that were utilized by Bayard \cite{Bayard} and Urbas \cite{Urbas} but failed to hold for $k \geq 3$. The second major obstacle occurs in the interchanging formula \eqref{interchanging formula 1} which gives rise to a negative curvature term. In contrast, the formula yields a positive curvature term for the Euclidean case which is rather crucial as demonstrated in \cite{Sheng-Urbas-Wang}.

In two recent papers \cite{Ren-Wang-1, Ren-Wang-2}, C.~Ren and Z.~Wang proved two powerful concavity inequalities for the $\sigma_{n-1}$ and $\sigma_{n-2}$ operators by exploiting their algebraic structures in depth, and then they were able to overcome the difficulties and derive the second order estimates for $k=n-1$ and $k=n-2$. However, it may not be feasible to generalize their method for other $k$'s. In \cite{Huang}, Huang assumed $\psi=\psi(x,u,Du)$ has a special dependence on the gradient terms so that an extra positive curvature term could be extracted from the twice differentiation of the equation. Huang's method is inspiring but the assumptions are not applicable when e.g. $\psi=\psi(x,u)$ does not contain the gradient terms at all.

Now the ultimate goal is reduced to deriving second order estimates in $\Omega$ for $3 \leq k \leq n-3$ and general $\psi$. Using the novel techniques that were developed by Guan-Ren-Wang \cite{Guan-Ren-Wang}, Z.~Wang \cite{Wang} has achieved the goal for $k$-admissible solutions whose graphs are $(k+1)$-convex. Our second main result establishes the estimates for $k$-admissible solutions whose graphs are semi-convex, which may attract more attention than Theorem \ref{k=2}.

\begin{theorem}\label{semi-convex}
Let $2 \leq k \leq n-1$ and let $\Omega \subseteq \bR^n$ be a convex $(k-1)$-admissible domain with a smooth boundary. Assume $\psi(x,u,Du) \in C^{2}(\overline{\Omega} \times \bR \times \bR^n)$ is positive and $\varphi \in C^2(\overline{\Omega})$ is spacelike. Suppose $u \in C^{4}(\Omega) \cap C^2(\overline{\Omega})$ is a $k$-admissible solution to 
\begin{align*}
       \sigma_k[u]&=\psi(x,u, Du)\quad  \text{in $\Omega$},\\
                u&=\varphi \quad\quad\quad\quad\quad     \text{on $\partial\Omega$},
\end{align*}
and the graph $\Sigma$ of $u$ is semi-convex i.e. the principal curvatures $\kappa=(\kappa_1,\ldots,\kappa_n)$ of $\Sigma$ satisfy
\begin{equation}
\kappa_{i}(x) \geq -K \quad \text{for all $x \in \Omega$ and all $1 \leq i \leq n$} \label{semi-convexity assumption}
\end{equation}
for some $K>0$. Then
\[\max_{\Omega} \kappa_{\max}(x) \leq C\left(1+\max_{\partial \Omega} \kappa_{\max}(x)\right)\] for some $C>0$ depending on $n,k,K, \norm{u}_{C^1(\overline{\Omega})}, \norm{\psi}_{C^2(\cD)}$ and $\norm{\varphi}_{C^1(\overline{\Omega})}$, where
\[\cD:=\overline{\Omega} \times [\inf_{\Omega} u, \sup_{\Omega} u] \times \bR^n.\]
\end{theorem}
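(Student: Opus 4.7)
The plan is to apply the maximum principle to an auxiliary test function of the form
\[ W = \log \kappa_1 + \phi(v) + A\, u, \]
where $\kappa_1 = \kappa_{\max}$ denotes the largest principal curvature, $v := (1-|Du|^2)^{-1/2}$ is the standard timelike gradient factor attached to a spacelike graph in Minkowski space, and $\phi$ together with the constant $A > 0$ are auxiliary data to be chosen below. If the maximum of $W$ over $\overline\Omega$ is attained on $\partial\Omega$ the claimed estimate follows immediately, since $\phi(v)+Au$ is bounded on $\overline\Omega$ in terms of the prescribed data. So I may assume the maximum is attained at some $x_0 \in \Omega$. At that point I would rotate to diagonalize the Weingarten tensor with $\kappa_1 \geq \cdots \geq \kappa_n$, assume $\kappa_1 \gg K$ (otherwise there is nothing to prove), and combine the vanishing of $\nabla W(x_0)$ with $F^{ii}W_{ii}(x_0)\leq 0$, using the standard formulas for the first and second derivatives of the largest eigenvalue; the second-derivative formula contributes the positive remainder $(2/\kappa_1)\sum_{p>1}F^{ii}(h_{1p;i})^2/(\kappa_1-\kappa_p)$.

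Next I would differentiate $\sigma_k[u]=\psi$ twice in the $e_1$-direction, which produces $F^{ii}h_{ii;11}$ together with the concavity term $-F^{ij,kl}h_{ij;1}h_{kl;1}$, and apply the Minkowski commutation formula to rewrite $F^{ii}h_{11;ii}$ as $F^{ii}h_{ii;11}+k\kappa_1^2\psi-\kappa_1\sum_i F^{ii}\kappa_i^2$. After dividing by $\kappa_1$, the crucial bad term is $-\sum_i F^{ii}\kappa_i^2$, whose worst summand $-F^{11}\kappa_1^2$ is the essential Minkowski obstruction singled out in the introduction. To absorb it I plan a dichotomy at $x_0$ according to whether $\kappa_i$ is close to $\kappa_1$ or not, combined with the Andrews--Gerhardt-type rearrangement
\[ -F^{ij,kl}h_{ij;1}h_{kl;1} \;\geq\; 2\sum_{p>1}\frac{F^{pp}-F^{11}}{\kappa_1-\kappa_p}(h_{1p;1})^2, \]
and with the Minkowski identity $v_{;ij} = v\,h_{ik}h^k_j + h_{ij;k}u^k$, which upon multiplication by $F^{ii}\phi'$ and use of the once-differentiated equation produces a positive term of order $\phi' v \sum_i F^{ii}\kappa_i^2$ that is able to compete with the bad sum.

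The main obstacle is precisely the worst summand $-F^{11}\kappa_1^2$ of the negative commutator term for general $k$: without additional structural information on $\kappa_1-\kappa_p$ or on the sign of $\kappa_p$ for $p\geq 2$, the positive pieces above do not suffice on their own. This is exactly where the semi-convexity assumption $\kappa_i \geq -K$ enters: after the dichotomy it bounds $\kappa_i^2 \leq \max(K^2,\delta^2\kappa_1^2)$ for $i\geq 2$, thereby isolating the $i=1$ contribution to the bad sum, which can then be absorbed by the two third-order positive quantities just described. The remaining lower-order residues (involving derivatives of $\psi$ and the $k\kappa_1\psi$ companion term) are compensated by the linear piece $AF^{ii}u_{ii}+AF^{ii}\phi''v_i^2$, whose combined size is of order $A\sum_i F^{ii}$ and can be made to dominate any bounded quantity by taking $A$ large. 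This forces $\kappa_1(x_0) \leq C$ for a controlled constant $C$, and the inequality $W(x) \leq W(x_0)$ then transfers the bound to every $x \in \Omega$, yielding the theorem.
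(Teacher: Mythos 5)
The commutation formula you quote has the wrong signs for Minkowski space: the paper's formula \eqref{interchanging formula 1} reads $h_{11ii}=h_{ii11}+h_{11}h_{ii}^{2}-h_{11}^{2}h_{ii}$, so after contraction and division by $\kappa_1$ one obtains $+\sum_i F^{ii}\kappa_i^2 - k\psi\kappa_1$; the sum $\sum_i F^{ii}\kappa_i^2$ is a \emph{good} term here, not the obstruction you claim. (Your version, $+k\kappa_1^2\psi-\kappa_1\sum_i F^{ii}\kappa_i^2$, is the Euclidean formula.) Because of this, your entire absorption scheme --- using $\phi' v\sum_i F^{ii}\kappa_i^2$ to cancel a supposed $-\sum_i F^{ii}\kappa_i^2$ and isolating ``$-F^{11}\kappa_1^2$'' with semi-convexity --- is aimed at a term that is not actually present. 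The real difficulty for $3\le k\le n-1$ lies in the third-order terms: after the second-derivative test the bad piece is $-F^{11}h_{111}^2/\kappa_1^2$ together with $-F^{ij,kl}h_{ij1}h_{kl1}/\kappa_1$, and the Andrews--Gerhardt rearrangement you cite controls only the off-diagonal part $\sum_{p>1}\frac{F^{pp}-F^{11}}{\kappa_1-\kappa_p}h_{1p1}^2$, leaving $F^{11}h_{111}^2$ untouched.

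The step your sketch is missing is precisely the Guan--Ren--Wang/Lu concavity inequality \eqref{Lu's inequality} for $\sigma_k$, together with the finite iteration over $l=1,\dots,k$ (the paper's Case 1 vs. Case 2). That lemma, applied with $\xi_i=h_{ii1}$ and combined with the once-differentiated equation $\sum_i F^{ii}h_{ii1}=\nabla_1\psi$, is what produces the otherwise unavailable $(1-\epsilon)h_{111}^2/\kappa_1^2$ to dominate $-F^{11}h_{111}^2/\kappa_1^2$; it has no analogue in the Andrews--Gerhardt inequality or in the gradient-term $\phi(v)$. Semi-convexity is then used in the paper not to bound $\kappa_i^2$ but in two precise places: to make $(1-\varepsilon)\kappa_1+(1+\varepsilon)\kappa_i\ge 0$ in \eqref{the second line}, and to estimate $\sigma_k(\kappa|1)\ge -CK\kappa_2\cdots\kappa_k$ in the final absorption. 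A further, smaller issue: the term $Au$ in your test function gives $AF^{ii}u_{ii}=Ak\psi\tilde{w}$, a bounded quantity, not a useful positive quantity of order $A\sum_i F^{ii}$; the paper generates the latter from a $\tfrac{\alpha}{2}|X|^2$ piece. So your outline, as written, does not close.
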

\begin{remark}
The new contribution here is that our estimate improves that of Z.~Wang \cite{Wang} in the sense that semi-convexity is weaker than $(k+1)$-convexity; for a proof of this fact, the reader is referred to \cite[Lemma 7]{Li-Ren-Wang} or \cite[Lemma 2.13]{Bin-1}.
\end{remark}

\begin{remark}
If one could obtain the curvature estimate without the additional semi-convexity assumption \eqref{semi-convexity assumption}, then we would be able to conclude the solvability of the Dirichlet problem \eqref{Dirichlet problem} for all $k$, at least for a constant boundary data. 
\end{remark}

In \cite{Ren-Wang-2}, C.~Ren and Z.~Wang had conjectured that their concavity inequality should hold for all $k$ with $2k>n$. Hence, the desired curvature estimate is expected to hold for all $k$ with $2k>n$. Although a verification for their conjecture is still absent, we may conjecture the following.

\begin{conjecture} 
Let $2 \leq k \leq n-1$ satisfy $2k>n$ and let $\Omega \subseteq \bR^n$ be a convex $(k-1)$-admissible bounded domain with a smooth boundary. Suppose $\psi(x,u,Du)$ is a smooth positive function satisfying $\psi_u \geq 0$ and $\varphi \in C^4(\overline{\Omega})$ is spacelike. Assume the existence of a $k$-admissible subsolution $\underline{u}$ such that
\begin{align*}
\sigma_k[\underline{u}]&\geq \psi(x,\underline{u}, D\underline{u})\quad  \text{in $\Omega$},\\
                \underline{u}&=\varphi \quad\quad\quad\quad\quad     \text{on $\partial\Omega$}.
\end{align*} Then there exists a unique $k$-admissible solution $u$ to 
\begin{align*}
\sigma_k[u]&= \psi(x,u, Du)\quad  \text{in $\Omega$},\\
                u&=\varphi \quad\quad\quad\quad\quad     \text{on $\partial\Omega$},
\end{align*}
belonging to $C^{3,\alpha}(\overline{\Omega})$ for any $\alpha \in (0,1)$.
\end{conjecture}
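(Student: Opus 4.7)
The plan is to deform the target equation to one that admits $\underline{u}$ as a trivial solution and to invoke the standard continuity argument. For $t \in [0,1]$, consider the family
\[\sigma_k[u^t] = (1-t)\sigma_k[\underline{u}] + t\,\psi(x, u^t, Du^t), \quad u^t|_{\partial \Omega} = \varphi,\]
so that $u^0 = \underline{u}$ is an obvious admissible solution. Openness of the set of admissible $t$ follows from the implicit function theorem in \Holder\ spaces, using that $\psi_u \geq 0$ together with $k$-admissibility make the linearization an invertible map $C^{2,\alpha}_0 \to C^{\alpha}$. Uniqueness is immediate from the maximum principle under $\psi_u \geq 0$. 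Closedness reduces to uniform a priori $C^{2,\alpha}$ estimates, after which Evans--Krylov \cite{Evans, Krylov} and Schauder theory upgrade these to $C^{3,\alpha}$ and the continuity method concludes.

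\textbf{Estimates through first order.} The $C^0$ bound below comes from comparing the homotopy solution with $\underline{u}$ via $\psi_u \geq 0$, while the upper bound is obtained by constructing a spacelike supersolution from a linear function of the form $\langle a, x \rangle + b$ adjusted on $\partial \Omega$ using the spacelike condition on $\varphi$. The $C^1$ bound is \cite[Proposition 3.1]{Bayard}, and in the Minkowski setting quantitatively yields $\sup |Du^t| \leq 1-\delta$ for $\delta > 0$ independent of $t$, which renders the equation uniformly elliptic along admissible directions and allows one to apply later machinery in a $t$-uniform way.

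\textbf{Second-order estimates.} For the boundary $C^2$ bound with general $\varphi$, we adapt the barrier techniques of Bayard \cite{Bayard}, Urbas \cite{Urbas} and Ren--Wang \cite{Ren-Wang-1, Ren-Wang-2}: the subsolution $\underline{u}$ is used as a barrier to control the double tangential and mixed tangential--normal derivatives at $\partial \Omega$, while the double normal derivative is handled by the uniform ellipticity combined with the nondegeneracy of $\sigma_k$ along $\partial \Gamma_k$. The convexity of $\Omega$ together with $(k-1)$-admissibility should replace the uniform convexity hypothesis used in Urbas' argument. The interior $C^2$ bound is the heart of the matter: by Theorem \ref{semi-convex} it suffices to produce a uniform lower bound on the principal curvatures, i.e.\ to establish \eqref{semi-convexity assumption} with $K$ depending only on the data. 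Under $2k > n$, this semi-convexity estimate is exactly what the conjectured Ren--Wang concavity inequality for $\sigma_k$ would deliver: apply the standard test function $W = \log \kappa_{\max} + \Phi(|Du|^2) + \lambda u$, and at its maximum point the concavity contribution absorbs the negative curvature term produced by the spacelike interchanging formula, the $C^1$ estimate controls the first-order terms, and $\psi_u \geq 0$ takes care of the zero-order ones, closing the argument.

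\textbf{Where the difficulty concentrates.} Everything above is essentially assembly of existing techniques except for one ingredient: the unconditional concavity inequality for $\sigma_k$ in the full range $2k > n$. Ren and Wang's proofs for $k = n-1$ and $k = n-2$ exploit Newton--Maclaurin identities that simplify dramatically for $k$ close to $n$, and I do not see any direct extension to intermediate values of $k$. A genuinely new argument — perhaps of a combinatorial nature exploiting the geometry of the \Garding\ cone $\Gamma_k$ near its boundary, or a reformulation through an auxiliary inverse-curvature-type transformation — appears to be required. This single algebraic step is the sole serious barrier; once the inequality is in hand, the rest of the scheme above should close and deliver the conjectured existence theorem.
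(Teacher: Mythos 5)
This is labeled a \emph{conjecture} in the paper, and the paper offers no proof; there is no ``paper's own argument'' to compare against. Your proposal does not prove it either, as you candidly say — it is a reduction to an unproven algebraic inequality. Given that honesty, let me focus on two points where your reduction is not quite what it would need to be, rather than on the open lemma itself.

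First, you misidentify what the conjectured Ren--Wang concavity inequality would deliver. You write that ``by Theorem \ref{semi-convex} it suffices to produce a uniform lower bound on the principal curvatures'' and that ``this semi-convexity estimate is exactly what the conjectured Ren--Wang concavity inequality for $\sigma_k$ would deliver.'' That is not how it works. The concavity inequality is a bound on $-F^{pp,qq}h_{pp1}h_{qq1}$; it does not, by itself, give an absolute lower bound $\kappa_i \geq -K$ with $K$ a priori controlled. What the hypothesis $2k > n$ actually provides for free is the \emph{relative} lower bound $\kappa_j \geq -\frac{n-k}{k}\kappa_1$ from Lemma \ref{negative kappa}, which is a general property of the cone $\Gamma_k$. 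In the paper's final theorem, for $k = n-1, n-2$ one does not route through Theorem \ref{semi-convex} at all: the Ren--Wang concavity inequality kills the first line of \eqref{2nd critical 6} directly, and the relative bound $\kappa_j \geq -\frac{n-k}{k}\kappa_1$ together with $2k>n$ makes the second line \eqref{the second line} non-negative. So the natural plan is to redo the full second-order estimate with the conjectured inequality substituted in, not to first deduce semi-convexity and then invoke Theorem \ref{semi-convex}. Your stated reduction would not close even granting the conjectured inequality.

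Second, you treat the boundary $C^2$ estimate for general $\varphi$ and $3 \leq k \leq n-1$ as a routine adaptation of the Bayard/Urbas/Ren--Wang barrier arguments. The paper is explicit that this is itself an open gap: Bayard obtained the boundary bound for general $k$ only when $\varphi = \const$, and for general boundary data only $k=2$ is known. This is precisely why the paper's remark after Theorem \ref{semi-convex} hedges with ``at least for a constant boundary data.'' The conjecture as stated allows non-constant $\varphi$, so your scheme has a second unresolved ingredient beyond the concavity inequality: the double-normal second-derivative estimate on $\partial\Omega$. Until both pieces are supplied, the continuity-method scaffolding you outline (which is otherwise correct and standard) cannot close.
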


To conclude the introduction, we mention some related research. In \cite{Schnurer}, Schn\"{u}rer considered the Dirichlet problem in a general Lorentzian product manifold for a class of curvature functions which excludes the operator $\sigma_k(\kappa)$ concerned here. On the other hand, Gerhardt \cite{Gerhardt-2, Gerhardt-3, Gerhardt-4} studied closed hypersurfaces of prescribed curvature in Lorentzian manifolds as well. Indeed, much of our proof remains valid in a more generic setting, however, since the problem has not been fully solved even in some very simple cases (say, the case when $\psi=\const$), it might not be worthwhile to create more complications at this moment. For literature on entire spacelike hypersurfaces, the reader may be referred to the series of work \cite{Math-Ann, JFA, CVPDE, JGA, Analysis-PDE, Wang-Xiao-1, Wang-Xiao-2, Bayard-2009, Bayard-Delanoe, Bayard-Schnurer, Bayard-2006, Bayard-Seppi, Bayard-2023} by P.~Bayard, Ph.~Delano\"{e}, C.~Ren, A.~Seppi, O.~C.~Schn\"{u}rer, Z.~Wang, L.~Xiao. We also wish to call attention to recent papers \cite{Guo-Jiao, Guo-Jiao-2} of Guo, H.~Jiao, and Y.~Jiao, in which they studied the same Dirichlet problem for different classes of fully nonlinear equations. Another recent paper \cite{rigidity} studies rigidity theorems for spacelike hypersurfaces.

The rest of this note is organized as follows. In section \ref{Preliminaries}, we review preliminary concepts and fix notations. In section \ref{gradient estimates}, we derive global gradient estimates for admissible solutions which extend the ones obtained by Bayard \cite{Bayard} in the sense that our estimates hold for a right-hand side $\psi=\psi(x,u)$ that may depend on $u$ and we have replaced Bayard's assumption of strict convexity on both $\Omega$ and $\varphi$ by the subsolution condition. In section \ref{k=2 proof}, we prove curvature estimates for admissible solutions to the scalar curvature equation in all dimensions without assuming uniform convexity on $\Omega$ and $\varphi$, hence improving Urbas' result \cite{Urbas} and Theorem \ref{k=2} follows accordingly. In section \ref{curvature estimate}, we prove Theorem \ref{semi-convex} which extends the estimate due to Z.~Wang \cite{Wang}. It might be noteworthy that in both section \ref{k=2 proof} and section \ref{curvature estimate}, instead of ordinary curvature estimates, we actually have proved Pogorelov type interior curvature estimates assuming the boundary data is affine. Another thing to note is that our curvature estimates hold for a general right-hand side $\psi=\psi(x,u,Du)$ that may even depend on $Du$.

\section{Preliminaries} \label{Preliminaries}
In this section, we review some basic concepts and fix notations which will be used throughout the subsequent sections but without directly quoting them every time.

Recall that the Minkowski space $\bR^{n,1}$ is the Euclidean space $\bR^{n+1}$ equipped with the metric
\[ds^2=dx_{1}^2+\cdots+dx_{n}^2-dx_{n+1}^2.\]  

\begin{definition}
A smooth hypersurface $\Sigma$ in $\bR^{n,1}$ is said to be $k$-convex if its principal curvatures 
\[\kappa[\Sigma] \in \Gamma_k=\{\kappa \in \bR^n: \sigma_{j}(\kappa)>0 \quad \forall\ 1 \leq j \leq k\}\] at every point; it is said to be spacelike if for every $p \in \Sigma$, the induced metric $\langle \cdot, \cdot \rangle_{T_p\Sigma}$ on the tangent space is positive definite.
\end{definition}

Let $\Sigma$ be the hypersurface in $\bR^{n,1}$ given as the graph of a smooth function $u: \overline{\Omega} \to \bR$. The induced metric and the second fundamental form of $\Sigma$ are then
\[g_{ij}=\delta_{ij}-u_iu_j, \quad h_{ij}=\frac{u_{ij}}{\sqrt{1-|Du|^2}}.\] 

\begin{remark}
The graph of $u$ over $\overline{\Omega}$ is a spacelike hypersurface in $\bR^{n,1}$ if and only if 
\begin{equation}
\sup_{\overline{\Omega}} |Du|<1. \label{spacelike condition}
\end{equation}
Thus, we may as well say a function $u$ is spacelike in $\overline{\Omega}$ if \eqref{spacelike condition} holds.
\end{remark}
The unit normal vector field to $\Sigma$ is 
\[\nu=\frac{(Du,1)}{\sqrt{1-|Du|^2}}.\] Note that we shall use $Du=(u_1,\ldots,u_n)$ and $D^2u=(u_{ij})$ to denote the ordinary gradient vector and the ordinary Hessian matrix. On the other hand, for a chosen local orthonormal frame $\{e_1,\ldots,e_n\}$ on $T\Sigma$, the symbol $\nabla$ will denote the induced Levi-Civita connection on $\Sigma$. For a smooth function $u$ on $\Sigma$, we set $\nabla_i u = \nabla_{e_i} u$ and $\nabla_{ij} u = \nabla^2 u(e_i,e_j)$. The norm of $\nabla u$ with respect to $g_{ij}$ is then
\[|\nabla u|=\sqrt{g^{ij}u_iu_j}=\frac{|Du|}{\sqrt{1-|Du|^2}},\] where
\[g^{ij}=\delta_{ij}+\frac{u_iu_j}{1-|Du|^2}\] is the inverse of $g_{ij}$.

We also recall the following fundamental formulas for hypersurfaces in $\bR^{n,1}$.
\begin{align}
\nabla_{ij} X&=h_{ij}\nu, \tag{Gauss formula}\\
\nabla_i\nu&=h_{ij}e_{j}, \tag{Weingarten formula}\\
\nabla_kh_{ij}&=\nabla_j h_{ik}, \tag{Codazzi equation}\\
R_{ijst}&=-(h_{is}h_{jt}-h_{it}h_{js}). \tag{Gauss equation}
\end{align}

For a symmetric matrix $A=(a_{ij})$ and an operator
\[F: \{\text{symmetric matrices}\} \to \bR,\] we define
\[F^{ij}=\frac{\partial F}{\partial a_{ij}}, \quad F^{ij,rs}=\frac{\partial^2 F}{\partial a_{ij} \partial a_{rs}}.\] When $F(A)=f(\lambda(A))$ depends only on the eigenvalues and when $A$ is diagonal, we have $F^{ij}=f_i\delta_{ij}$ where
\[f_i=\frac{\partial f}{\partial \lambda_i}.\] Moreover, we have
\[\sum_{i,j} F^{ij}a_{ij}=\sum_{i=1}^{n} f_i(\lambda(A))\lambda_i, \quad \sum_{i,j,k} F^{ij}a_{ik}a_{jk}=\sum_{i=1}^{n} f_i(\lambda(A))\lambda_{i}^2.\]

In this article, we are considering an equation of the form
\[F(A)=f(\lambda(A))=\psi(x,u,Du),\] where $f=\sigma_k$ and $A=(a_{ij})$ is given by
\[a_{ij}=\frac{1}{w}\gamma^{ik}u_{kl}\gamma^{lj}, \quad \gamma^{ik}=\delta_{ik}+\frac{u_iu_k}{w(1+w)},\] since the principal curvatures $\kappa=(\kappa_1,\ldots,\kappa_n)$ of $\Sigma$ are eigenvalues of the following matrix
\[\frac{1}{w}\left(I+\frac{Du \otimes Du}{w^2}\right)D^2u, \quad w=\sqrt{1-|Du|^2}.\]

To solve our equation, we need also to define the notion of admissible solutions.

\begin{definition}
A function $u \in C^{\infty}(\Omega) \cap C(\overline{\Omega})$ is said to be $k$-admissible if its graph $\Sigma=(x,u(x))$ over $\overline{\Omega}$ is a $k$-convex spacelike hypersurface in $\bR^{n,1}$. Equivalently, $u$ is $k$-admissible if \eqref{spacelike condition} holds and 
\[\sigma_j[u]>0 \quad \text{for all $x \in \overline{\Omega}$ and all $1 \leq j \leq k$}.\]
\end{definition}

\begin{remark}
The homogenized equation operator $\sigma_{k}^{1/k}[u]$ is concave with respect to the second derivatives for $k$-admissible solutions. This is a key condition to invoke the Evans-Krylov device in \cite{Evans, Krylov}.
\end{remark}

We shall always assume $\Omega$ is an admissible domain.

\begin{definition}
A smooth bounded domain $\Omega \subseteq \bR^n$ is said to be $(k-1)$-admissible if at least $k-1$ principal curvatures of $\partial \Omega$ (relative to the interior normal) are positive at each boundary point. 
\end{definition}

\begin{lemma}
Every affine spacelike boundary data $\varphi$ on $\partial \Omega$ has a $k$-admissible extension to $\overline{\Omega}$ if and only if the smooth bounded domain $\Omega$ is convex and $(k-1)$-admissible.
\end{lemma}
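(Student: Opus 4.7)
The statement is an iff, and I would prove both directions by reducing, via a Lorentz isometry of $\bR^{n,1}$, to the classical problem of constructing a $k$-admissible function with zero boundary data.

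For $(\Leftarrow)$: Given affine spacelike $\varphi(x)=a\cdot x+c$ on a convex, $(k-1)$-admissible $\Omega$, apply the Lorentz boost $L$ of $\bR^{n,1}$ sending the spacelike hyperplane $\graph(\varphi)$ onto the horizontal hyperplane $\{x_{n+1}=0\}$. Since $L$ is linear in ambient coordinates, the image of a graph over $\Omega$ is a graph over a new domain $\tilde\Omega$ obtained from $\Omega$ by a linear dilation (factor $\sqrt{1-|a|^2}$ in direction $a$); hence $\tilde\Omega$ is also convex and $(k-1)$-admissible. It thus suffices to extend the zero boundary datum $k$-admissibly on $\tilde\Omega$. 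For this, set $\tilde u=-\epsilon v$, where $v\in C^\infty(\overline{\tilde\Omega})$ is an admissible solution to the classical $k$-Hessian Dirichlet problem $\sigma_k(-D^2 v)=1$, $v|_{\partial\tilde\Omega}=0$. Its solvability (CNS) requires the tangential principal curvatures of $\partial\tilde\Omega$ to lie strictly inside $\Gamma_{k-1}(\bR^{n-1})$; convexity plus $(k-1)$-admissibility give exactly this, since $\kappa_1,\dots,\kappa_{n-1}\geq 0$ with at least $k-1$ strictly positive forces $\sigma_j(\kappa)\geq \kappa_1\cdots\kappa_j>0$ for every $j\leq k-1$. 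For $\epsilon>0$ small, $|D\tilde u|=\epsilon|Dv|<1$ (spacelike), and the Minkowski shape operator of $\graph(\tilde u)$, divided by $\epsilon$, converges to $-D^2 v$ as $\epsilon\to 0$ (the induced metric factor $g\to I$ and $w\to 1$); by continuity its eigenvalues lie in the open cone $\Gamma_k$. Pulling back by $L^{-1}$ yields the required $k$-admissible extension of $\varphi$.

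For $(\Rightarrow)$: Applying the hypothesis to $\varphi\equiv 0$ produces a $k$-admissible $u$ with $u|_{\partial\Omega}=0$. The condition $\sigma_1(\kappa)>0$, via a maximum-principle argument at an interior critical point, forces $u\leq 0$ in $\Omega$, and a Hopf-type boundary lemma gives $|\partial_\nu u(x_0)|>0$ at every $x_0\in\partial\Omega$. In boundary normal coordinates, writing $\partial\Omega$ locally as $\{x_n=\phi(x')\}$ and twice-differentiating $u(x',\phi(x'))=0$ shows that the tangential Hessian of $u$ at $x_0$ is a positive multiple of the second fundamental form of $\partial\Omega$ in a diagonalizing frame. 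Combining this with the $\Gamma_k$-membership of the shape operator $g^{-1}h$ at $x_0$ and the monotonicity of $\Gamma_k$ in its smallest components forces $\kappa_i(\partial\Omega)\geq 0$ for every $i$ (convexity) with at least $k-1$ strictly positive ($(k-1)$-admissibility).

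The hardest part is Step 2 of $(\Leftarrow)$: producing $v$ with $-D^2 v\in \Gamma_k$ strictly. The paper's definition of $(k-1)$-admissibility (a pointwise count of positive curvatures) is \emph{a priori} weaker than the classical CNS strict $(k-1)$-convexity hypothesis required by the $k$-Hessian existence theory, but combining it with convexity precisely closes the gap—this is the single crucial observation that makes the construction work. A secondary delicate point is that congruence by the Minkowski metric factor $g^{-1/2}$ does not preserve $\Gamma_k$-membership in general, but our small-$\epsilon$ perturbation keeps this congruence close to the identity, so $\Gamma_k$-membership is preserved by continuity.
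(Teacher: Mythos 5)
The paper does not supply its own proof of this lemma; it simply cites \cite[Lemma~2.1]{Bayard}, so your attempt cannot be compared against an in-text argument. Assessed on its own terms, your proposal is partly sound but has a genuine gap in the forward implication.

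Your $(\Leftarrow)$ direction is a reasonable (if heavy-handed) route: boost $\graph(\varphi)$ to the horizontal hyperplane, note that the resulting domain $\tilde\Omega$ is the image of $\Omega$ under a linear dilation and hence remains convex and $(k-1)$-admissible, observe that convexity plus $(k-1)$-admissibility places $\kappa^b(\partial\tilde\Omega)$ strictly inside $\Gamma_{k-1}(\mathbb{R}^{n-1})$ (the CNS domain condition), solve the Euclidean $k$-Hessian problem, and rescale by a small $\epsilon$ so that the spacelike shape operator is a small perturbation of $\epsilon(-D^2 v)$ and therefore stays in the open cone $\Gamma_k$ by compactness of $\overline{\tilde\Omega}$ and conicity. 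Pulling back by the boost, an isometry of $\mathbb{R}^{n,1}$, preserves $k$-admissibility. This works, though one could likely construct the extension more directly (e.g.\ via the distance function) rather than invoking the full CNS existence theory.

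Your $(\Rightarrow)$ direction is where the argument breaks. The preliminary steps are fine: applying the hypothesis to $\varphi\equiv 0$, the admissibility condition $\sigma_1(\kappa)>0$ rules out an interior maximum of $u$, so $u<0$ in $\Omega$; the strong maximum principle and Hopf lemma for the uniformly elliptic operator $g^{ij}\partial_i\partial_j$ give $\partial_n u<0$ on $\partial\Omega$; and twice-differentiating $u=0$ along $\partial\Omega$ shows the tangential block of the shape operator $A$ at a boundary point is $\frac{|u_n|}{w}\,II^b$. But the final step --- deducing $\kappa_i^b\geq 0$ for all $i$ from $A\in\Gamma_k$ --- is false as stated. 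For $k<n$ the cone $\Gamma_k$ is not contained in the positive orthant (e.g.\ $(3,3,-1)\in\Gamma_2$), and the normal entry $a_{nn}=u_{nn}/w^3$ is not constrained by the boundary geometry alone, so it can be taken large enough to admit a negative tangential eigenvalue while remaining in $\Gamma_k$. What the pointwise argument actually yields (via Cauchy interlacing, or by sending the normal entry to $+\infty$ and using monotonicity of $\Gamma_k$) is only that the tangential eigenvalues $\kappa^b$ lie in $\overline{\Gamma_{k-1}}(\mathbb{R}^{n-1})$ --- the Euclidean CNS-type condition --- and that at least $k-1$ of them are strictly positive. This gives $(k-1)$-admissibility but does \emph{not} give convexity, which is precisely the extra, Minkowski-specific content of the lemma. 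Convexity must be extracted by exploiting the full quantifier ``\emph{every} affine spacelike $\varphi$'': as $|a|\to 1$ with $a$ aligned to a would-be concave direction of $\partial\Omega$ at $x_0$, the associated boost squashes $\tilde\Omega$ in that direction, blowing up the negative boundary curvature relative to the others, and this is what obstructs any $k$-admissible extension. That global, quantifier-driven argument is missing from your sketch, and the phrase ``monotonicity of $\Gamma_k$ in its smallest components'' does not substitute for it.
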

\begin{proof}
See \cite[Lemma 2.1]{Bayard}.
\end{proof}

\begin{remark}
Here $\Omega$ being convex means that its principal curvatures $\kappa_{1}^{b}(x_0), \ldots, \kappa_{n-1}^{b}(x_0)$ are all non-negative for $x_0 \in \partial \Omega$. Also, according to our definition, an $(n-1)$-admissible domain is just a strictly convex domain i.e. $\kappa_{1}^{b}(x_0), \ldots, \kappa_{n-1}^{b}(x_0)$ are all strictly positive. Similarly, an $n$-admissible function is just a strictly convex function that is spacelike.
\end{remark}

Finally, we state some commonly used properties of the $\sigma_k$ operator.

\begin{notation}
Observe that
\[\frac{\partial}{\partial \kappa_i}\sigma_k(\kappa)=\sigma_{k-1}(\kappa)\bigg|_{\kappa_i=0}=\sigma_{k-1}(\kappa_1,\ldots,\kappa_{i-1},0,\kappa_{i+1},\ldots,\kappa_n).\] Throughout the article, we will use $\sigma_{k-1}(\kappa|i)$ or $\sigma_{k}^{ii}(\kappa)$ interchangeably to denote the first order derivatives. The notations $\sigma_{k-2}(\kappa|ij)$ or $\sigma_{k}^{ii,jj}(\kappa)$ are defined in a similar way for second order derivatives. 
\end{notation}

\begin{lemma} \label{basic properties}
For all $1 \leq k \leq n$ and $\kappa \in \bR^n$, we have
\begin{align*}
\sigma_k(\kappa)&=\kappa_i\sigma_{k-1}(\kappa|i)+\sigma_{k}(\kappa|i),\\
\sum_{i=1}^{n}\kappa_i\sigma_{k-1}(\kappa|i)&=k\sigma_k(\kappa), \\
\sum_{i=1}^{n} \sigma_{k-1}(\kappa|i)&=(n-k+1)\sigma_{k-1}(\kappa).
\end{align*} Moreover, if $\kappa_1 \geq \cdots \geq \kappa_n$ and $\kappa \in \Gamma_k$, then 
\[\sigma_{k}^{11}(\kappa) \cdot \kappa_1 \geq \frac{k}{n} \sigma_k(\kappa).\]
\end{lemma}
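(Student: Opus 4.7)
The first three identities are combinatorial in nature and I would verify them by direct counting arguments on the monomials of $\sigma_k$. For the splitting identity in part 1, I would partition the $k$-subsets of $\{1,\ldots,n\}$ into those containing the index $i$ and those not, yielding $\sigma_k(x) = x_i \sigma_{k-1}(x|i) + \sigma_k(x|i)$ at once. For the Euler-type identity in part 2, summing $x_i \sigma_{k-1}(x|i)$ over $i$ counts each monomial $x_{i_1}\cdots x_{i_k}$ of $\sigma_k$ exactly $k$ times, once for each element of its index set. Part 3 is analogous: each monomial of $\sigma_{k-1}$, supported on $k-1$ indices, appears in $\sigma_{k-1}(x|i)$ for precisely the $n-k+1$ indices $i$ outside its support.

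For the concluding inequality, my plan is to first apply part 1 to write $\sigma_k(x) = x_1 \sigma_{k-1}(x|1) + \sigma_k(x|1)$, which reduces the desired estimate $n x_1 \sigma_{k-1}(x|1) \geq k \sigma_k(x)$ to the equivalent statement
\[(n-k)\, x_1 \sigma_{k-1}(x|1) \;\geq\; k\, \sigma_k(x|1).\]
Since $x \in \Gamma_k$ and $x_1 = \max_i x_i$, one checks $x_1 > 0$ and $\sigma_{k-1}(x|1) > 0$, so the left-hand side is strictly positive and the inequality is automatic whenever $\sigma_k(x|1) \leq 0$.

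In the remaining case $\sigma_k(x|1) > 0$, the key observation is that the truncated vector $y := (x_2, \ldots, x_n)$ then lies in the corresponding $\Gamma_k$-cone of $\bR^{n-1}$. This is a standard consequence of the \Garding\ cone structure: for $x \in \Gamma_k$ with $x_1$ maximal, the truncations $\sigma_l(x|1) = \sigma_l(y)$ are already positive for every $l \leq k-1$, and combining this with the assumption $\sigma_k(y) > 0$ places $y$ inside the cone. Once this is known, every cofactor $\sigma_{k-1}(y|j)$ is strictly positive, and parts 2 and 3 applied to $y$ give
\[k \sigma_k(y) \;=\; \sum_{j=1}^{n-1} y_j \sigma_{k-1}(y|j) \;\leq\; y_1 \sum_{j=1}^{n-1} \sigma_{k-1}(y|j) \;=\; (n-k)\, y_1 \sigma_{k-1}(y),\]
the middle step being valid whether $y_j$ is positive or negative, since if $y_j < 0$ the corresponding term on the left is negative while the one on the right is non-negative. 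Substituting $y_1 = x_2 \leq x_1$ and $\sigma_{k-1}(y) = \sigma_{k-1}(x|1)$ yields $k\sigma_k(x|1) \leq (n-k) x_1 \sigma_{k-1}(x|1)$, as desired.

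The main obstacle is the cone-membership claim used above, which is classical but relies on a careful analysis of how elementary symmetric polynomials behave when the largest component is removed from an admissible vector; everything else in the argument is straightforward bookkeeping with the identities of parts 1 through 3.
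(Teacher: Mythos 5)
The paper states this lemma without proof, treating it as a collection of standard facts about elementary symmetric polynomials, so there is no paper proof to compare against. Your argument is essentially correct and is in fact the standard route to the final inequality.

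Parts 1--3 are straightforward and your counting arguments are fine. For part 4, the reduction via part 1 to the equivalent form $(n-k)\,x_1\sigma_{k-1}(x|1) \geq k\,\sigma_k(x|1)$ is exactly right, and the case split on the sign of $\sigma_k(x|1)$ is the usual device. Two comments on the remaining case $\sigma_k(x|1)>0$. First, the cone-membership claim $y=(x_2,\ldots,x_n)\in\Gamma_k(\bR^{n-1})$ that you flag as the main obstacle has a clean short proof worth recording: for $1\le j\le k-1$ one has $\sigma_j(x|1)=\partial_1\sigma_{j+1}(x)>0$ because $x\in\Gamma_k\subseteq\Gamma_{j+1}$ and the first derivatives of $\sigma_{j+1}$ are positive on $\Gamma_{j+1}$; combined with the assumption $\sigma_k(y)=\sigma_k(x|1)>0$ this gives $y\in\Gamma_k$. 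Second, your justification for the middle inequality
\[
\sum_{j=1}^{n-1} y_j\,\sigma_{k-1}(y|j) \;\le\; y_1\sum_{j=1}^{n-1}\sigma_{k-1}(y|j)
\]
addresses only the terms with $y_j<0$; the cleaner statement is that for \emph{every} $j$ one has $(y_1-y_j)\sigma_{k-1}(y|j)\ge 0$, since $y_1=x_2\ge y_j$ by the ordering and $\sigma_{k-1}(y|j)>0$ because $y\in\Gamma_k$. With that clarification the chain closes correctly, using $\sigma_{k-1}(y)=\sigma_{k-1}(x|1)$ and $y_1=x_2\le x_1$. Do also note the degenerate endpoint $k=n$, where both sides of the reduced inequality vanish ($\sigma_n(x|1)\equiv 0$ in $n-1$ variables), so the estimate holds with equality; your case analysis implicitly covers this but it is worth a sentence.
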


\begin{lemma}[Maclaurin's inequality]
Let $2 \leq k \leq n$ and suppose $\kappa=(\kappa_1,\ldots,\kappa_n) \in \Gamma_k$. Denote by
\[H_{k}:=\binom{n}{k}^{-1}\sigma_k.\] Then we have
\[H_{k}^{1/k}(\kappa) \leq \cdots \leq H_{2}^{1/2} (\kappa) \leq H_1(\kappa).\]
\end{lemma}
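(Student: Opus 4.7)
The plan is to deduce Maclaurin's chain from Newton's inequalities, leveraging the fact that all $H_j$ with $1 \leq j \leq k$ are strictly positive on the \Garding\ cone $\Gamma_k$. First I would invoke the classical Newton inequalities, namely that for every $x \in \bR^n$ and every $1 \leq j \leq n-1$ one has
\[H_j(x)^2 \geq H_{j-1}(x)\, H_{j+1}(x), \qquad H_0 := 1.\]
The standard route is to consider $P(t) = \prod_{i=1}^n (t - x_i) = \sum_{j=0}^n (-1)^j \binom{n}{j} H_j(x)\, t^{n-j}$, whose real-rootedness is preserved under differentiation by Rolle's theorem; differentiating $P$ a total of $n-j-1$ times produces a degree-$(j+1)$ real-rooted polynomial, and a discriminant-type condition on its three highest coefficients yields the asserted inequality. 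Being thoroughly classical, I would simply cite this step rather than reproduce it.

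Next I would exploit the hypothesis $x \in \Gamma_k$: since $H_1,\ldots,H_k > 0$ there, the ratios
\[r_j(x) := \frac{H_j(x)}{H_{j-1}(x)}, \qquad 1 \leq j \leq k,\]
are well defined and positive, and Newton's inequality becomes precisely $r_{j+1} \leq r_j$. Thus $r_1 \geq r_2 \geq \cdots \geq r_k$.

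Finally, telescoping gives $H_j(x) = r_1 r_2 \cdots r_j$, so $H_j^{1/j} = G_j$ where $G_j := (r_1 \cdots r_j)^{1/j}$ is the geometric mean of the first $j$ ratios. Since $r_{j+1}$ is the minimum of $r_1,\ldots,r_{j+1}$, it is bounded above by the geometric mean $G_j$ of the preceding ratios, hence
\[G_{j+1}^{j+1} = G_j^{j} \cdot r_{j+1} \leq G_j^{j} \cdot G_j = G_j^{j+1},\]
i.e.\ $G_{j+1} \leq G_j$. Iterating for $j = 1, \ldots, k-1$ delivers the full chain $H_k^{1/k} \leq \cdots \leq H_2^{1/2} \leq H_1$, as required.

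The main obstacle is really just the invocation of Newton's inequality, which I would offload to a citation; beyond that the argument is essentially bookkeeping. The one subtle point is that the positivity of $H_1,\ldots,H_k$ on $\Gamma_k$, rather than any sign assumption on the $x_i$ themselves, is precisely what legitimizes forming the ratios $r_j$ and interpreting $H_j^{1/j}$ as a geometric mean.
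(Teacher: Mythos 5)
The paper states this lemma without proof---it is treated as a classical fact about elementary symmetric means---so there is no in-paper argument to compare against. Your reconstruction is correct and is essentially the canonical route: Newton's inequalities $H_j^2 \geq H_{j-1}H_{j+1}$ hold for arbitrary real tuples, and the one genuinely nontrivial point in adapting them to the present setting is exactly the one you flag, namely that $x \in \Gamma_k$ guarantees $H_0,\ldots,H_k > 0$ even though individual $x_i$ may be negative, so the ratios $r_j = H_j/H_{j-1}$ are well defined and positive for $1 \leq j \leq k$. From there the chain $r_1 \geq \cdots \geq r_k$, the identification $H_j^{1/j} = (r_1\cdots r_j)^{1/j}$, and the monotonicity of geometric means of a decreasing positive sequence are all sound. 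One small remark for polish: when you assert $r_{j+1} \leq G_j$, it is worth writing out that this follows because $r_{j+1} \leq r_i$ for every $i \leq j$ forces $r_{j+1}^j \leq r_1\cdots r_j$; as stated ("the minimum is bounded above by the geometric mean") it is true but compresses a step. Also, the inequality chain you derive runs over $1 \leq j \leq k-1$, which is precisely the range on which the lemma makes an assertion, so there is no issue with the partial positivity failing beyond index $k$. In short, correct proof, standard approach, and you correctly identified the only subtle hypothesis-dependent point.
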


\begin{lemma} \label{negative kappa}
Let $\kappa=(\kappa_1,\ldots,\kappa_n) \in \Gamma_k$. Suppose $\kappa_j \leq 0$ for some $1 \leq j \leq n$. Then
\[\sigma_{k}^{jj}(\kappa) \geq C(n,k)\sum_{i=1}^{n} \sigma_{k}^{ii}(\kappa)\] and 
\[\kappa_j \geq -\frac{n-k}{k}\kappa_1.\]
\end{lemma}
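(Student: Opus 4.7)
The plan is to treat the two inequalities separately, each via direct manipulation of the basic identities in Lemma \ref{basic properties} combined with the monotonicity of $\sigma_k$ on the G\r{a}rding cone $\Gamma_k$.

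For the first inequality, I would start from
\[\sigma_{k-1}(\kappa)=\sigma_{k-1}(\kappa|j)+\kappa_j\sigma_{k-2}(\kappa|j).\]
Since $\kappa\in\Gamma_k\subseteq\Gamma_{k-1}$, the partial derivative $\sigma_{k-1}^{jj}(\kappa)=\sigma_{k-2}(\kappa|j)$ is strictly positive, and the hypothesis $\kappa_j\leq 0$ then yields $\sigma_k^{jj}(\kappa)=\sigma_{k-1}(\kappa|j)\geq \sigma_{k-1}(\kappa)$. Applying the summation identity $\sum_i \sigma_{k-1}(\kappa|i)=(n-k+1)\sigma_{k-1}(\kappa)$ from Lemma \ref{basic properties} immediately gives
\[\sigma_k^{jj}(\kappa)\geq \frac{1}{n-k+1}\sum_{i=1}^{n}\sigma_k^{ii}(\kappa),\]
so the explicit constant $C(n,k)=\frac{1}{n-k+1}$ does the job.

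For the second inequality, I order $\kappa_1\geq\cdots\geq\kappa_n$, so that $\kappa_1=\max_i\kappa_i$ is strictly positive (since $\sigma_1(\kappa)>0$ forces the maximum to exceed the average). Because $\partial\sigma_l/\partial\kappa_i=\sigma_{l-1}(\kappa|i)>0$ throughout $\Gamma_l$ for every $1\leq l\leq k$, the cone $\Gamma_k$ is stable under adding any non-negative vector and $\sigma_k$ is monotone in each variable there. Raising each $\kappa_i$ with $i\neq j$ up to $\kappa_1\geq \kappa_i$ therefore keeps the vector inside $\Gamma_k$ along the whole deformation and does not decrease $\sigma_k$. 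The resulting vector has all coordinates equal to $\kappa_1$ except the $j$-th, which is $\kappa_j$, and its $k$-th elementary symmetric polynomial is
\[\binom{n-1}{k}\kappa_1^{k}+\binom{n-1}{k-1}\kappa_1^{k-1}\kappa_j\geq \sigma_k(\kappa)>0.\]
Dividing through by $\binom{n-1}{k-1}\kappa_1^{k-1}>0$ and using $\binom{n-1}{k}/\binom{n-1}{k-1}=(n-k)/k$ yields $\kappa_j\geq -\frac{n-k}{k}\kappa_1$, as required.

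The argument is essentially bookkeeping inside $\Gamma_k$, so no substantial obstacle is expected. The only step deserving care is justifying that the coordinate-raising deformation in the second part never exits $\Gamma_k$, but this follows from the standard fact that each $\sigma_l$ with $1\leq l\leq k$ has positive first partial derivatives on $\Gamma_l$ and hence is monotone along any non-negative perturbation of the argument.
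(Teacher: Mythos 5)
Your proof of the first inequality is correct and matches the paper's approach: the paper likewise points to the expansion $\sigma_{k-1}(\kappa)=\kappa_j\sigma_{k-1}^{jj}(\kappa)+\sigma_{k-1}(\kappa|j)$ together with the identity $\sum_i\sigma_{k-1}(\kappa|i)=(n-k+1)\sigma_{k-1}(\kappa)$ from Lemma~\ref{basic properties}; you have simply made the one-line sketch explicit and pinned down $C(n,k)=\tfrac{1}{n-k+1}$.

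For the second inequality, the paper merely cites Lemma~11 of Ren--Wang \cite{Ren-Wang-2}, while you supply a self-contained argument by raising each coordinate $\kappa_i$ with $i\neq j$ up to $\kappa_1$ inside $\Gamma_k$ and invoking the monotonicity of $\sigma_k$ along non-negative perturbations. The argument is sound: the connectedness/openness argument you allude to does keep the linear path in $\Gamma_k$, the endpoint evaluation $\binom{n-1}{k}\kappa_1^k+\binom{n-1}{k-1}\kappa_1^{k-1}\kappa_j$ is the correct expansion, and dividing by $\binom{n-1}{k-1}\kappa_1^{k-1}>0$ gives the stated bound (in fact with strict inequality, which is even a little stronger than claimed). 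What your route buys is independence from the external reference; what it costs is that you must explicitly verify the stability of $\Gamma_k$ under the deformation, which you flag correctly as the one step deserving care. One further small remark you could have added for completeness: the hypothesis $\kappa_j\leq 0$ forces $j\neq 1$ once the coordinates are ordered decreasingly, since $\kappa_1\leq 0$ would contradict $\sigma_1(\kappa)>0$; this makes $\kappa_1>0$ and $\kappa_1\geq\kappa_i$ for $i\neq j$ automatic, as your argument uses.
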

\begin{proof}
The first inequality follows from Lemma \ref{basic properties} by looking at
\[\sigma_{k-1}=\kappa_j\sigma_{k-1}^{jj}(\kappa)+\sigma_{k-1}(\kappa|j).\]
For the second inequality, see \cite[Lemma 11]{Ren-Wang-2}.
\end{proof}
\section{Global gradient estimates} \label{gradient estimates}
In this section, we obtain global gradient estimates when the prescribed function $\psi=\psi(x,u)$ is allowed to depend on $u$. Previously, this result was achieved by Bayard \cite[Proposition 3.1]{Bayard} for $\psi=\psi(x)$ and in a subsequent work \cite[(2.7)]{Bayard-2006}, Bayard commented that the proof could be readily extended to hold for $\psi=\psi(x,u)$. Here we not only provide a written verification and also we establish the $C^1$ estimate under the subsolution condition instead of the strict convexity assumption on $\Omega$ and $\varphi$.

We first recall the standard comparison principle.

\begin{lemma}
Let $u,v \in C^2(\Omega) \cap C(\overline{\Omega})$ be spacelike in $\Omega$. Assume $u$ is moreover $k$-admissible in $\Omega$ and the positive function $\psi(x,z) \in C^{1}(\overline{\Omega} \times \bR)$ satisfies $\psi_z \geq 0$. If $ u \leq v $ on $\partial \Omega$ and 
\[\sigma_k[u] \geq \psi(x,u), \quad \sigma_k[v] \leq \psi(x,v) \quad \text{in $\Omega$},\] then $u \leq v$ in $\Omega$.
\end{lemma}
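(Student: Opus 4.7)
The plan is a standard maximum-principle argument by contradiction. Suppose $M := \max_{\overline{\Omega}}(u - v) > 0$; since $u \leq v$ on $\partial \Omega$, this maximum is attained at some interior point $x_0 \in \Omega$. The first-order condition $D(u-v)(x_0) = 0$ gives $Du(x_0) = Dv(x_0)$, so the spacelike factor $w = \sqrt{1-|Du|^2}$ and the matrix $\gamma^{ij}$ coincide for $u$ and $v$ at $x_0$. Combined with $D^2(u - v)(x_0) \leq 0$ and the representation $a_{ij} = w^{-1}\gamma^{ik}u_{kl}\gamma^{lj}$, this yields $a[u](x_0) \leq a[v](x_0)$ as symmetric matrices. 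Since $a[u](x_0) \in \Gamma_k$ by $k$-admissibility of $u$ and $\Gamma_k$ is invariant under the addition of positive semidefinite matrices, $a[v](x_0) \in \Gamma_k$ as well; monotonicity of $\sigma_k$ on $\Gamma_k$ then yields $\sigma_k[v](x_0) \geq \sigma_k[u](x_0)$. Chained with the two differential inequalities and $Du(x_0) = Dv(x_0)$,
\[\psi(x_0, u(x_0), Du(x_0)) \leq \sigma_k[u](x_0) \leq \sigma_k[v](x_0) \leq \psi(x_0, v(x_0), Du(x_0)).\]

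The hypothesis $u(x_0) > v(x_0)$ together with $\psi_z \geq 0$ reverses the outer inequality, forcing all quantities above to coincide at $x_0$. To promote this pointwise equality into a genuine contradiction, I would invoke the strong maximum principle on the connected component $U$ of $\{u - v > 0\}$ containing $x_0$. Setting $F(p, r) := \sigma_k(\lambda(a(p, r)))$ and integrating along the segment from $(Dv, D^2 v)$ to $(Du, D^2 u)$, and likewise applying the fundamental theorem of calculus to $\psi$, produces a linear inequality
\[\tilde A^{ij}(x)(u-v)_{ij} + \tilde B^i(x)(u-v)_i - c(x)(u - v) \geq 0\]
with $c \geq 0$ coming from $\psi_z \geq 0$, and with $\tilde A^{ij}$ uniformly elliptic on a neighborhood of every point where $u - v$ attains the value $M$. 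Ellipticity there follows because at such a point the argument of the first paragraph places both $a[u]$ and $a[v]$ in $\Gamma_k$, so convexity of $\Gamma_k$ keeps the entire convex combination in $\Gamma_k$ and continuity extends this to a neighborhood. The strong maximum principle then forces $u - v \equiv M$ on that neighborhood, and a standard clopen argument propagates this to all of $U$, contradicting $u - v = 0$ on $\partial U \cap \Omega$.

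The principal technical point is securing ellipticity of the linearization in a full neighborhood of a maximum point, rather than only at the point itself; without this the strong maximum principle cannot be brought to bear, and one is left with a pointwise equality that is powerless to contradict $M > 0$. The matrix inequality $a[u](x_0) \leq a[v](x_0)$ together with the convexity of $\Gamma_k$ obtained in the first step supply exactly the input needed to extend admissibility of the matrices along the connecting segment to a full neighborhood by continuity.
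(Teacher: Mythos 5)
Your argument is correct; the paper itself gives no proof for this lemma, disposing of it by citation to Bayard, Caffarelli--Nirenberg--Spruck, and Ivochkina, and what you have written is in substance the standard argument that those sources supply. You correctly recognize that because $\psi_z$ may vanish, the pointwise computation at an interior maximum of $u-v$ yields only an equality chain rather than an immediate contradiction, so the strong maximum principle must be applied to the linearized inequality obtained by Taylor-expanding along $u_t = tu + (1-t)v$. The two supporting facts you invoke are both correct and well placed: the matrix version of $\Gamma_k$ is indeed stable under adding a positive semidefinite matrix (a consequence of $\sigma_j^{pq}\succeq 0$ on $\widetilde\Gamma_j$ together with a connectedness argument along $A + sB$, $s\in[0,1]$), which puts $a[v](x_0)$ in $\Gamma_k$; and at a maximum point $Du(x_0)=Dv(x_0)$ makes $w$, $\gamma^{ij}$ coincide for the whole segment $u_t$, so that $a[u_t](x_0)$ is the honest convex combination $t\,a[u](x_0)+(1-t)\,a[v](x_0)$, which convexity of $\Gamma_k$ keeps inside the cone. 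Openness of $\Gamma_k$, compactness of $[0,1]$, and continuity of $(y,t)\mapsto a[u_t](y)$ then give uniform ellipticity of $\widetilde A^{ij}$ on a full neighborhood of the maximum point, which is exactly what Hopf's strong maximum principle and the clopen propagation across the component of $\{u>v\}$ require. The only cosmetic remark is that the first-order term $\widetilde B^i$ should be understood to absorb both the $F_{p_i}$ contribution and the $-\psi_{p_i}$ contribution; this does not affect the argument since the strong maximum principle places no sign condition on first-order coefficients.
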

\begin{proof}
See e.g. \cite[Theorem 5.1]{Bayard}, \cite[Lemma A]{CNS-5}, or \cite[Theorem 3.1]{Ivochkina-2}.
\end{proof}

By the work of Bartnik-Simon \cite{Bartnik-Simon}, there exists a $1$-admissible function $\overline{u}$ such that
\begin{align*}
\sigma_{1}[\overline{u}]&=n \cdot \left[\binom{n}{k}^{-1}\psi(x,\overline{u})\right]^{1/k} \quad \text{in $\Omega$,} \\
\overline{u}&=\varphi \ \quad\quad\quad\quad\quad\quad\quad\quad\quad\quad \text{on $\partial \Omega$}.
\end{align*} On the other hand, it follows from the Maclaurin's inequality that
\begin{align*}
\sigma_1[u]&=n \cdot H_1[u] \geq n \cdot \left(H_k[u]\right)^{1/k}=n \cdot \left[\binom{n}{k}^{-1}\psi(x,u)\right]^{1/k}
\end{align*} and so
\[u \leq \overline{u} \quad \text{in $\Omega$}\] by the comparison principle.

Similarly, by assuming the existence of a $k$-admissible subsolution i.e. some $k$-admissible function $\underline{u}$ such that
\begin{align*}
\sigma_{k}[\underline{u}]&\geq \psi(x,\underline{u}) \quad \text{in $\Omega$,} \\
\underline{u}&=\varphi \ \quad\quad\quad \text{on $\partial \Omega$},
\end{align*} we have
\begin{equation}
\underline{u} \leq u \leq \overline{u} \quad \text{in $\Omega$}. \label{C0 estimate}
\end{equation} 

\begin{remark}
When both $\Omega$ and $\varphi$ are strictly convex, there exists an $n$-admissible subsolution $\underline{u}$ due to the work of Delano\"{e} \cite{Delanoe}; see also \cite{Guan}.
\end{remark}

Consequently, by the Hopf lemma, for the interior normal derivative $\partial_{\gamma}$ at any point on $\partial \Omega$, we have
\[\frac{\partial \underline{u}}{\partial \gamma} \leq \frac{\partial u}{\partial \gamma} \leq \frac{\partial \overline{u}}{\partial \gamma}.\] Thus, we conclude that
\begin{equation}
\sup_{\partial \Omega} |Du| \leq \max\{\sup_{\partial \Omega} |D\underline{u}|, \max_{\partial \Omega} |D\overline{u}|\} =: 1-\theta_0 \label{boundary C1 estimate}
\end{equation} for $\theta_0 \in (0,1)$.

\begin{theorem}
Let $2 \leq k \leq n-1$. Suppose $\psi(x,z) \in C^{\infty}(\overline{\Omega} \times \bR)$ is positive and $\psi_z \geq 0$. Assume further the existence of a $k$-admissible subsolution $\underline{u}$ such that
\begin{align*}
\sigma_k[\underline{u}]&\geq \psi(x,\underline{u})\quad  \text{in $\Omega$},\\
                \underline{u}&=\varphi \ \quad\quad\quad     \text{on $\partial\Omega$}.
\end{align*}
If $u \in C^3(\Omega) \cap C^1(\overline{\Omega})$ is a $k$-admissible solution of \eqref{Dirichlet problem}, then
\begin{equation}
\sup_{\overline{\Omega}}  |Du| \leq 1-\theta \label{global gradient estimate}
\end{equation} for some $\theta \in (0,1)$ depending on $n,k, \theta_0, \diam(\Omega), \sup_{\partial \Omega}|\varphi|, \sup_{\cD} |D\psi|$ and $\inf_{\cD} \psi$ where $\cD:=\overline{\Omega} \times [\inf_{\Omega}u,\sup_{\Omega}u]$ and $\theta_0$ is the constant in \eqref{boundary C1 estimate}.
\end{theorem}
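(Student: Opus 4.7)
Proof plan. The $C^0$-bound $\underline{u} \le u \le \overline{u}$ (via comparison with the Bartnik--Simon supersolution $\overline{u}$ and the hypothesized subsolution $\underline{u}$) and the boundary $C^1$-estimate \eqref{boundary C1 estimate} (via Hopf's lemma applied to $u - \underline{u}$ and $\overline{u} - u$) are already in hand from the preceding discussion. What remains is to show that $v := (1 - |Du|^2)^{-1/2}$ does not blow up in the interior. I would do this by a maximum-principle argument applied to an auxiliary function of the form
\[
\Phi := \log v + g(u)
\]
on $\overline{\Omega}$, with $g \in C^2(\bR)$ chosen in terms of $\inf_{\cD}\psi$ and $\sup_{\cD}|D\psi|$; a natural candidate is $g(u) = -\log(M - u)$ for some $M$ slightly larger than $\sup_\Omega u$, so that $g'' = (g')^2$ and $g'(u) > 0$. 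The boundary gradient bound together with the $C^0$-bound on $u$ control $\Phi$ on $\partial \Omega$; if the maximum of $\Phi$ is instead attained at some interior point $x_0 \in \Omega$, the task reduces to bounding $v(x_0)$.

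At such $x_0$, working in an orthonormal frame on $T_{x_0}\Sigma$ diagonalizing the second fundamental form $h_{ij} = \kappa_i \delta_{ij}$ and writing $L := F^{ij}\nabla_{ij}$ for the linearized operator, the two identities I would compute are
\[
Lu = v F^{ij}h_{ij} = v \cdot k\psi
\]
(by Euler's identity for $\sigma_k$) and
\[
Lv = v \sum_i f_i \kappa_i^2 + \nabla\psi \cdot \nabla u
\]
(by differentiating the equation $F = \psi$ and invoking the Codazzi equation). Combined with the first-order critical condition $\nabla_i v / v = -g'(u) \nabla_i u$, plugging these into $L\Phi \le 0$ produces
\[
\sum_i f_i \kappa_i^2 + \frac{\nabla\psi \cdot \nabla u}{v} + \bigl(g''(u) - g'(u)^2\bigr)\sum_i f_i (\nabla_i u)^2 + g'(u) \cdot v k \psi \le 0.
\]
The monotonicity hypothesis $\psi_u \ge 0$ enters through the decomposition
\[
\nabla\psi \cdot \nabla u = (D_x\psi) \cdot (\nabla u)_x + \psi_u \cdot |\nabla u|_\Sigma^2, \qquad |\nabla u|_\Sigma^2 = v^2 - 1,
\]
in which the second summand is nonnegative. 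With $g$ chosen to kill the third term on the left and to leave a favorable $g'(u) v k \psi > 0$, combining with $\sum_i f_i \kappa_i^2 \ge 0$ yields a contradiction once $v(x_0)$ becomes sufficiently large, and together with the boundary estimate this gives the bound \eqref{global gradient estimate}.

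The main obstacle is the matching of orders. Because unit tangent vectors to a spacelike graph can have $(n+1)$-component of size $v$, the intrinsic gradient of $\psi$ on $\Sigma$ itself grows like $v$; the potentially bad term $\nabla\psi \cdot \nabla u / v$ is then genuinely $O(v)$, of the same order as the structural good term $g'(u) \cdot v k \psi$. The argument closes only when the sign and magnitude of $g'(u)$ are tuned against $\sup_{\cD}|D_x\psi|/\inf_{\cD}\psi$ and $\sup u - \inf u$. The hypothesis $\psi_u \ge 0$ is precisely what prevents an unfavorable $O(v^2)$ contribution from $\psi_u \cdot (v^2 - 1)$, and is the single new ingredient beyond Bayard's $\psi = \psi(x)$ argument required to carry out the extension.
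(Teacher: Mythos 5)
Your broad setup is right and the two identities you compute (the forms of $Lu$ and $Lv$ and the first-order critical relation) match the paper's, but the way you try to close the argument has a genuine gap. By choosing $g(u)=-\log(M-u)$ so that $g''=(g')^2$, you annihilate the term $(g''-(g')^2)\sum_i f_i(\nabla_iu)^2$ and then discard $\sum_i f_i\kappa_i^2\ge 0$, which leaves you needing $g'(u)>\sup_{\cD}|D\psi|/(k\inf_{\cD}\psi)=:C_0$ over the entire range of $u$. But with $g'(u)=1/(M-u)$ and $M>\sup_\Omega u$, the smallest value of $g'$ on $[\inf u,\sup u]$ is $1/(M-\inf u)$, and demanding this exceed $C_0$ forces $M<\inf u+1/C_0$; combined with $M>\sup u$ this imposes $\osc_\Omega u<1/C_0$, a constraint on the solution which is not available a priori. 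So the tuning you invoke in your last paragraph cannot actually be carried out in general, and the argument does not close.

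The paper avoids this by taking $g(u)=Bu$ with $B$ as large as needed (there is no upper constraint on $B$, unlike on your $g'$). The price is that $(g''-(g')^2)\sum_i f_i(\nabla_iu)^2=-B^2\sum_i f_i(\nabla_iu)^2$ is genuinely negative, and it is exactly the term $\sum_i f_i\kappa_i^2$ — which you threw away as merely nonnegative — that pays this price. Indeed, the first-order critical condition gives $\kappa_1=h_{11}=-g'(u)v=-Bv$, so
\[
\sum_i f_i\kappa_i^2-B^2\sum_if_i(\nabla_iu)^2\ \ge\ F^{11}\kappa_1^2-B^2F^{11}(\nabla_1u)^2\ =\ B^2v^2F^{11}\bigl(1-|Du|^2\bigr)\ =\ B^2F^{11}\ \ge\ 0,
\]
using $(\nabla_1u)^2=v^2|Du|^2$ and the spacelike relation $1-|Du|^2=1/v^2$. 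After this cancellation the remaining inequality is $0\ge v\,(Bk\psi-|D\psi|)$, which gives a contradiction once $B>\sup_{\cD}|D\psi|/(k\inf_{\cD}\psi)$, with no restriction on $\osc u$. The lesson is that in the Minkowski setting the term $\sum_i f_i\kappa_i^2$ is not disposable filler: it is the structural term that makes an arbitrarily strong exponential weight affordable, and any choice of $g$ designed to avoid needing it necessarily caps $g'$ in a way that reintroduces a dependence on the oscillation of $u$.

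Your use of $\psi_z\ge 0$ (to discard the $\psi_z|\nabla u|^2/v$ contribution) and the overall test-function-plus-maximum-principle scheme do agree with the paper; it is only the balance between the weight $g$ and the curvature term $\sum_i f_i\kappa_i^2$ that needs to be rethought.
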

\begin{proof}
According to \eqref{boundary C1 estimate}, it is sufficient to estimate the quantity
\[\tilde{w}:=\frac{1}{w}=\frac{1}{\sqrt{1-|Du|^2}}\] and prove that
\[\sup_{\overline{\Omega}} \tilde{w} \leq C\left(1+\sup_{\partial \Omega} \tilde{w} \right)\] for some $C>0$ depending on the known constants.
As in \cite{Bayard, Guo-Jiao}, we may consider the function
\[\tilde{Q}=\tilde{w}e^{Bu},\] where $B$ is a positive constant to be determined later. If $\tilde{Q}$ achieves its maximum on $\partial \Omega$ then we have the bound 
\[\sup_{\overline{\Omega}} \tilde{w} \leq \left(\sup_{\partial \Omega} \tilde{w}\right)\cdot \exp \left[B \cdot \left(2\sup_{\partial \Omega} |\varphi|+\diam(\Omega)\right)\right]\]
and we are through.  Suppose this is not the case and $\tilde{Q}$ attains its maximum at some interior point $x_0 \in \Omega$. By rotating the standard coordinates $\{\epsilon_1,\ldots,\epsilon_{n+1}\}$ of $\bR^{n+1}$ if necessary, we may assume that
\[u_1(x_0)=|Du(x_0)|>0 \quad \text{and} \quad u_{j}(x_0)=0 \quad \text{for $j \geq 2$}.\] By further rotating $\{\epsilon_2,\ldots,\epsilon_n\}$, we may also assume $\{u_{ij}(x_0)\}$ is diagonal for $i,j \geq 2$. Pick an orthonormal frame $\{e_1,\ldots,e_n\}$ around the point $X_0=(x_0,u(x_0))$ e.g.
\[e_i=\gamma^{is}\tilde{\partial}_{s}, \quad \gamma^{is}=\delta_{is}+\frac{u_iu_s}{w(1+w)}, \quad \tilde{\partial}_{s}=\epsilon_s+u_s\epsilon_{n+1},\] so that
\[\nabla_1 u = \frac{|Du|}{w}=|\nabla u|, \quad \nabla_i u = u_i = 0 \quad \text{for $i \geq 2$}.\] Then, by the Weingarten formula, we have
\[\nabla_i \tilde{w}=-\nabla_i \langle \nu, \epsilon_{n+1}\rangle=-\langle h_{ij}e_j,\epsilon_{n+1}\rangle=-h_{ij} \langle \gamma^{js}\tilde{\partial}_{s}, \epsilon_{n+1}\rangle=h_{ij} \nabla_ju.\]

Taking the logarithm, the function
\[Q:=\log \tilde{Q}= \log \tilde{w} + Bu\] also attains its maximum at $x_0$. Consequently, at $x_0$, we have
\begin{align}
0&=\nabla_i Q=\frac{\nabla_i \tilde{w}}{\tilde{w}}+B\nabla_i u=\frac{h_{1i}\nabla_1 u}{\tilde{w}}+B\nabla_i u
\end{align} from which it follows that at $x_0$,
\begin{equation}
h_{11}=-B\tilde{w} \quad \text{and} \quad h_{1i}=0 \quad \text{for $i \geq 2$}. \label{gradient 1st critical}
\end{equation} In the remaining part of the proof, all subsequent calculations are carried out at the point $X_0$ without explicitly saying so.

Since 
\[h_{11}=\frac{u_{11}}{w^3} \quad \text{and} \quad h_{ij}=\frac{u_{ij}}{w} \quad \text{for $i,j \geq 2$},\] the matrix $\{h_{ij}\}$ is diagonal. Hence
\[F^{ij}:=\frac{\partial F}{\partial h_{ij}}=\frac{\partial \sigma_k}{\partial \kappa_i} \delta_{ij}\] and we calculate
\begin{gather} \label{gradient 2nd critical}
\begin{split}
0 & \geq F^{ii}\nabla_{ii} Q \\
&= F^{ii}\frac{\nabla_ih_{i1}\nabla_1u+h_{i1}\nabla_{i1}u}{\tilde{w}}-F^{ii} \frac{(h_{i1}\nabla_1u)^2}{\tilde{w}^2}+B F^{ii} \nabla_{ii} u\\
&=F^{ii}h_{i1i}\frac{\nabla_1u}{\tilde{w}}+F^{11}h_{11}^2-B^2 F^{11}\tilde{w}^2|Du|^2+Bk\psi \tilde{w},
\end{split}
\end{gather} where we have used \eqref{gradient 1st critical} and the Gauss formula
\[\nabla_{ij}u=-\nabla_{ij}\langle X, \epsilon_{n+1}\rangle = - \langle \nabla_{ij}X, \epsilon_{n+1}\rangle=-h_{ij}\langle \nu, \epsilon_{n+1}\rangle=\tilde{w}h_{ij}.\]
For the first term in \eqref{gradient 2nd critical}, we differentiate $F(h_{ij})=\psi$ and invoke the Codazzi equation and the Weingarten formula, 
\begin{align*}
F^{ii}h_{i1i}&=F^{ii}h_{ii1}=\nabla_1\psi=\psi_{x_{j}}\nabla_1 x_j + \psi_z \nabla_1 u + \partial_{p_k}\psi \nabla_1 \nu_{k}\\
&=\frac{\psi_{x_1}}{w}+\psi_{z}\frac{|Du|}{w} = \tilde{w}\psi_{x_1}+\tilde{w}\psi_{z}|Du| \\
&\geq \tilde{w}\psi_{x_1}.
\end{align*} 
Substituting this back into \eqref{gradient 2nd critical}, we have
\begin{align*}
0&\geq \tilde{w}\psi_{x_1}|Du|+B^2\tilde{w}^2F^{11}-B^2\tilde{w}^2|Du|^2F^{11}+Bk\psi\tilde{w} \\
&\geq -\left(|D\psi| \cdot |Du|\right)\tilde{w}+B^2F^{11}\tilde{w}^2(1-|Du|^2)+Bk\psi \tilde{w}.
\end{align*}
Since $u$ is spacelike i.e. $|Du| < 1$ and 
\[1-|Du|^2=\frac{1}{\tilde{w}^2},\] it follows that
\begin{equation}
0 \geq \tilde{w} \left(Bk\psi - |D\psi|\right). \label{gradient 3rd critical}
\end{equation}
Therefore, if we had chosen e.g.
\[B> \frac{\sup_{\cD} |D\psi|}{k \cdot \inf_{\cD} \psi},\] then the right-hand side of \eqref{gradient 3rd critical} would be strictly positive and a contradiction would arise. Thus, we conclude that the maximum must be attained on the boundary and
\[\sup_{\overline{\Omega}} \tilde{w} \leq \left(\sup_{\partial \Omega} \tilde{w}\right) \exp\left[\frac{\sup_{\cD} |D\psi|}{k\cdot \inf_{\cD} \psi} \left(2 \sup_{\partial \Omega} \varphi + \diam(\Omega)\right)\right].\] The proof is now complete.
\end{proof}

\section{The scalar curvature equation} \label{k=2 proof}
The curvature bound for the scalar curvature equation was first obtained by Bayard \cite{Bayard} in dimension three for $\psi=\psi(x)$ and in dimension four for $\psi=\const$. It was then extended by Urbas \cite{Urbas} to all dimensions for $\psi=\psi(x,u)$. We remark that Urbas' proof in fact works for the case $\psi=\psi(x,u,Du)$ as well. However, as Urbas commented in \cite{Urbas}, the proof relied crucially on the additional assumption that the boundary data $\varphi$ is uniformly convex i.e. $D^2\varphi \geq c_0 I$ uniformly for some positive constant $c_0$. The cruciality of this condition is illustrated in our Remark \ref{cruciality of uniform convexity} below.

In this section, we improve the curvature estimates for the scalar curvature equation by removing the assumption of uniform convexity. 

\begin{theorem}
Let $\varphi \in C^2(\Omega) \cap C(\overline{\Omega})$ be spacelike and affine. Suppose $\psi(x,z,p) \in C^{\infty}(\overline{\Omega} \times \bR \times \bR^n)$ is positive. If $u \in C^4(\Omega) \cap C^2(\overline{\Omega})$ is a $2$-admissible solution to
\begin{align*}
       \sigma_2[u]&=\psi(x,u, Du)\quad  \text{in $\Omega$},\\
                u&=\varphi \quad\quad\quad\quad\quad     \text{on $\partial\Omega$},
\end{align*}
then the maximum principal curvature 
\[\kappa_{\max}(x):=\max_{1 \leq i \leq n} \kappa_i(x)\]
of its graph $\Sigma$ satisfies
\[\sup_{\Omega'}\kappa_{\max} \leq C(\Omega')\] for any $\Omega' \subset \subset \Omega$, where $C(\Omega')>0$ depends on $n, \theta, \norm{\psi}_{C^2(\cD)}, \inf_{\cD} \psi,$ and $\norm{\varphi}_{C^1(\overline{\Omega})}$; here $\theta$ is the constant in \eqref{global gradient estimate} and 
\[\cD:=\overline{\Omega} \times [\inf_{\Omega} u, \sup_{\Omega} u] \times \bR^n.\]
\end{theorem}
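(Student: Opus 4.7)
The approach is a Pogorelov-type interior curvature estimate, leveraging the affine boundary data to furnish a cutoff that vanishes on $\partial\Omega$. First I would show that $\eta := \varphi - u > 0$ in $\Omega$: if $\eta$ attained a positive interior maximum at some $x_1 \in \Omega$, then $D^2 u(x_1) \leq D^2\varphi(x_1) = 0$ would force all principal curvatures $\kappa_i(x_1) \leq 0$, contradicting the $2$-admissibility condition $\sigma_1(\kappa)>0$. Hence $\eta > 0$ in $\Omega$ and $\eta = 0$ on $\partial\Omega$.

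Next, I would consider a test function of the form
\[
W = \log \kappa_{\max} + \beta \log \eta + \Phi,
\]
where $\Phi$ is a well-chosen function of $\tilde w$ and $\beta > 0$ is a constant to be fixed; the non-smoothness of $\kappa_{\max}$ at eigenvalue multiplicities is handled by the standard smooth-perturbation device. Since $\log\eta \to -\infty$ on $\partial\Omega$, $W$ attains its maximum at some interior point $x_0$, and establishing a bound $W(x_0)\leq C$ yields the desired estimate on every compact $\Omega' \subset\subset \Omega$, because $\eta \geq c(\Omega') > 0$ there. After rotating the frame so that $(h_{ij}(x_0))$ is diagonal with $h_{11} \geq h_{22}\geq \cdots \geq h_{nn}$, I would apply the critical equations $\nabla_\alpha W(x_0) = 0$ and $\sum_\alpha F^{\alpha\alpha}\nabla_{\alpha\alpha} W(x_0) \leq 0$, where $F^{\alpha\alpha} := \sigma_2^{\alpha\alpha}(\kappa)$.

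The main computational ingredients are the following. \emph{First}, the commutation formula, derived from the Ricci identity and the Minkowski Gauss equation $R_{ijst} = -(h_{is}h_{jt} - h_{it}h_{js})$, reads
\[
h_{11,\alpha\alpha} = h_{\alpha\alpha,11} + h_{11}^2 h_{\alpha\alpha} - h_{11}h_{\alpha\alpha}^2;
\]
contracting with $F^{\alpha\alpha}/h_{11}$ and using $\sum F^{\alpha\alpha}\kappa_\alpha = k\psi$ produces the favourable linear term $k\psi\,h_{11}$ together with the notorious Minkowski \emph{bad} term $-\sum F^{\alpha\alpha}\kappa_\alpha^2$. \emph{Second}, twice-differentiating $\sigma_2(h) = \psi$ combined with the concavity of $\sigma_2^{1/2}$ bounds $\sum F^{\alpha\alpha,\beta\beta}h_{\alpha\alpha,1}h_{\beta\beta,1}$ by $(\nabla_1\psi)^2/(2\psi)$, which handles the second-derivative-of-$\psi$ contributions. \emph{Third}, the identity $\nabla_{ij}\eta = -(1+D\varphi\cdot Du)h_{ij}/w$ — a consequence of the Gauss formula and the affine (ambient-flat) nature of $\varphi$ — yields $\sum F^{\alpha\alpha}\eta_{\alpha\alpha}/\eta = -(1+D\varphi\cdot Du)k\psi/(w\eta)$. \emph{Fourth}, starting from the formula $\nabla_i\tilde w = h_{ij}\nabla_j u$ derived in section \ref{gradient estimates}, one obtains $\sum F^{\alpha\alpha}\nabla_{\alpha\alpha}\tilde w = \tilde w\sum F^{ii}\kappa_i^2 + \sum_j(\nabla_j u)(\nabla_j\psi)$, so that a $\Phi$ built from $\tilde w$ contributes a positive multiple of $\sum F^{ii}\kappa_i^2$ able to absorb the Minkowski bad term.

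The main obstacle is the delicate balancing at $x_0$ of three negative contributions — the Minkowski bad term $-\sum F^{\alpha\alpha}\kappa_\alpha^2$, the gradient-squared term $-\sum F^{\alpha\alpha}(h_{11,\alpha}/h_{11})^2$, and the boundary-singular term $-\beta\sum F^{\alpha\alpha}(\eta_\alpha/\eta)^2$ — against the positive terms $k\psi\,h_{11}$ and $\sum F^{\alpha\alpha}\Phi_{\alpha\alpha}$. The first-order critical condition $\nabla_\alpha h_{11}/h_{11} = -\beta\,\eta_\alpha/\eta - \Phi_\alpha$ allows the gradient-of-$h_{11}$ bad term to be split by Cauchy--Schwarz into a constant multiple of $\beta^2\sum F^{\alpha\alpha}(\eta_\alpha/\eta)^2$ plus a constant multiple of $\sum F^{\alpha\alpha}\Phi_\alpha^2$, with the latter further dominated by $\sum F^{ii}\kappa_i^2$. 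By choosing the amplitude inside $\Phi$ large enough (depending on the gradient-estimate constant $\theta$ from \eqref{global gradient estimate}), the aggregate coefficient of $\sum F^{ii}\kappa_i^2$ becomes strictly positive; choosing $\beta$ thereafter and using the bound $\sum F^{\alpha\alpha} = (n-1)\sigma_1 \leq n(n-1)h_{11}$ valid in $\Gamma_2$, one obtains after multiplying through by $\eta^2$ an estimate of the form $\eta(x_0)^2\,\kappa_{\max}(x_0) \leq C$. This implies $W(x_0)\leq C$ and hence the claimed interior bound.
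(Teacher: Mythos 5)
The overall Pogorelov strategy (cutoff $\eta=\varphi-u$, max at an interior point, first/second order critical conditions, balance against the linearized operator) matches the paper's, and the observation that $\eta>0$ via second-derivative comparison is sound. However, there are substantive problems.

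\textbf{Sign error in the commutation formula.} You write $h_{11,\alpha\alpha} = h_{\alpha\alpha,11} + h_{11}^2 h_{\alpha\alpha} - h_{11}h_{\alpha\alpha}^2$, whereas the correct Minkowski formula (the paper's \eqref{interchanging formula 1}, also Urbas' Lemma~2.1) is
\[
h_{11,\alpha\alpha} = h_{\alpha\alpha,11} + h_{11}h_{\alpha\alpha}^2 - h_{11}^2 h_{\alpha\alpha}.
\]
Your sign is opposite. This is not cosmetic: with the correct sign, contracting with $F^{\alpha\alpha}/h_{11}$ yields the \emph{positive} term $+\sum F^{\alpha\alpha}\kappa_\alpha^2$ together with a mild linear term $-k\psi\,\kappa_1$, not a bad quadratic term. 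The ``notorious Minkowski bad term $-\sum F^{\alpha\alpha}\kappa_\alpha^2$'' that your entire balancing argument is designed to cancel does not appear. Consequently the balancing you propose (make the coefficient of $\sum F^{ii}\kappa_i^2$ positive by choosing the amplitude of $\Phi$) addresses a phantom obstruction and misses the real one, which is the linear term $-C\kappa_1$; handling that term for \emph{non}-uniformly-convex $\varphi$ is precisely the point of the theorem, and the paper disposes of it via the auxiliary $\tfrac{\alpha}{2}|X|^2$ together with the $k=2$-specific property $\sum F^{ii}\geq C\kappa_1$ (\eqref{the property}).

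\textbf{Concavity treatment is too coarse.} You invoke only the scalar concavity bound $-F^{pq,rs}h_{pq1}h_{rs1}\geq -(\nabla_1\psi)^2/(2\psi)$. The paper instead keeps the off-diagonal contribution and uses Urbas' Lemma~2.2 to produce the explicitly nonnegative term $2\sum_{i\ne 1}\frac{F^{ii}-F^{11}}{\kappa_1-\kappa_i}\frac{h_{11i}^2}{\kappa_1}$, which is indispensable: it is what absorbs the third-order term $-\sum F^{ii}h_{11i}^2/\kappa_1^2$. Discarding it leaves the gradient-squared terms uncontrolled, and they are $O(\kappa_1^2/\eta^2)$, not mere perturbations.

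\textbf{Different test function and missing case analysis.} Your $W=\log\kappa_{\max}+\beta\log\eta+\Phi(\tilde w)$ is essentially the section~5 test function (minus $\tfrac{\alpha}{2}|X|^2$), not the section~4 one $\eta^\beta h_{\xi\xi}e^{\alpha|X|^2/2}$. The paper's $k=2$ proof does \emph{not} rely on a $\tilde w$-auxiliary; it uses a two-case split (on whether some $\kappa_n\leq -\delta\kappa_1$) together with an index partition $I,J$ by the size of $F^{jj}$, and these are what make the nonconvex-$\varphi$ case close. None of this appears in your outline, and it is unclear your stated balancing would substitute for it — in particular, the $\sum F^{\alpha\alpha}\Phi_\alpha^2\sim(\Phi')^2|Du|^2\tilde w^2\sum F^{\alpha\alpha}\kappa_\alpha^2$ penalty scales quadratically in the amplitude of $\Phi$, so the positivity of the aggregate $\sum F^{ii}\kappa_i^2$-coefficient is not automatic and must be checked against the gradient bound $\theta$.

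\textbf{Minor sign slips.} You have $\nabla_{ij}\eta=-(1+D\varphi\cdot Du)h_{ij}/w$; the correct coefficient is $-(1-D\varphi\cdot Du)/w$ (the paper's \eqref{differentiate the cutoff twice}). Similarly, $\sum F^{\alpha\alpha}\nabla_{\alpha\alpha}\tilde w = \tilde w\sum F^{ii}\kappa_i^2 + \langle\nabla\psi,\epsilon_{n+1}\rangle$, where $\langle\nabla\psi,\epsilon_{n+1}\rangle=-\sum_j(\nabla_j u)(\nabla_j\psi)$. Neither breaks the argument on its own, but together with the commutation sign they suggest the computation was not actually carried through.
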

\begin{remark}
It is sufficient to assume $\varphi$ is spacelike and satisfies $\sigma_2[\varphi]<\sigma_2[u]$ in $\Omega$.
\end{remark}
\begin{proof}
For convenience in this proof only, we write the scalar curvature equation in the following form
\[F[u]:=\sigma_{2}^{1/2}[u]=\psi(x,u,Du),\] to better invoke the concavity property of the $\sigma_{2}^{1/2}$ operator.

Consider the quantity
\[\widetilde{W}(X,\xi)=\eta^{\beta}h_{\xi\xi}\exp\left(\frac{\alpha}{2}|X|^2\right) \] for $X \in \Sigma$ and $\xi \in T_{X}\Sigma$, where $\eta=\varphi-u$, and $\alpha, \beta >0$ are some large constants to be chosen later.

Suppose the maximum of $\widetilde{W}$ is attained at some interior point $X_0=(x_0,u(x_0)) \in \Sigma$ and some $\xi_0 \in T_{X_0}\Sigma$. We choose a local orthonormal frame $\{e_1,\ldots,e_n\}$ around $X_0$ such that
\[\xi_0=e_1, \quad \nabla_{e_i}e_j=0 \quad \text{at $X_0$}.\] We may also assume the second fundamental form $h_{ij}=\kappa_i\delta_{ij}$ is diagonal at $X_0$ with principal curvatures ordered as
\[\kappa_1 \geq \kappa_2 \geq \cdots \geq \kappa_n.\] Let $\zeta=e_1$. Then the function
\[W(X)=\eta^{\beta}h_{ab}\zeta_a\zeta_b\] is defined near $X_0$ and attains an interior maximum also at $X_0$. By our special choice of the frame, we find that $Z:=h_{ab}\zeta_a\zeta_b$ satisfies
\[\nabla_iZ=\nabla_ih_{11} \quad \text{and} \quad \nabla_i\nabla_j Z = \nabla_i \nabla_j h_{11} \quad \text{at $X_0$}.\] Therefore, by working with $\log W$, we obtain at $X_0$
\begin{align}
0&=\beta \frac{\nabla_i \eta}{\eta} + \frac{h_{11i}}{h_{11}}+\alpha\langle X,e_i\rangle, \label{k=2 1st critical 1}\\
0&\geq \beta\frac{\nabla_{ii}\eta}{\eta}-\beta \left(\frac{\nabla_i\eta}{\eta}\right)^2 +\frac{h_{11ii}}{\kappa_1}-\frac{h_{11i}^2}{\kappa_{1}^2}+\alpha \left(1+h_{ii}\langle X, \nu \rangle \right). \label{k=2 2nd critical 1}
\end{align} In what follows, we will carry out all calculations at the point $X_0$ without explicitly indicating so. Now, we shall contract \eqref{k=2 2nd critical 1} with
\[F^{ij}:=\frac{\partial (\sigma_{2}^{1/2})}{\partial h_{ij}}\] and estimate it term by term.

Recall the interchanging formula \cite[Lemma 2.1]{Urbas}, we have
\begin{gather} \label{interchanging formula}
\begin{split}
F^{ij}\nabla_i\nabla_j h_{11}=&-F^{ij,kl}\nabla_1 h_{ij}\nabla_1 h_{kl}-F^{ij}h_{ij}\kappa_{1}^2\\
&+F^{ij}h_{ik}h_{jk}\kappa_1+\nabla_1\nabla_1 \psi \\
=&-F^{ij,kl}\nabla_1 h_{ij}\nabla_1 h_{kl}-F\kappa_{1}^2 \\
 &+\kappa_1 \sum_{i=1}^{n} F^{ii}\kappa_{i}^2 + \nabla_1\nabla_1\psi.
\end{split}
\end{gather} For the first term, we apply \cite[Lemma 2.2]{Urbas} in addition to concavity of $\sigma_{2}^{1/2}$ to obtain that
\[-F^{ij,kl}h_{ij1}h_{kl1}=\sum_{i \neq j} \frac{F^{ii}-F^{jj}}{\kappa_i-\kappa_j}h_{ij1}^2\geq 2\sum_{i \neq 1} \frac{F^{ii}-F^{11}}{\kappa_1-\kappa_i}h_{11i}^2.\] For the last term in \eqref{interchanging formula}, since $\psi$ contains the gradient, we have
\begin{equation}
\nabla_1\nabla_1\psi \geq -C(1+\kappa_{1}^2). \label{differentiate twice}
\end{equation} Next, we consider the term $F^{ii}\nabla_{ii}\eta$. To compute this we may extend $\varphi$ to be constant in the $\epsilon_{n+1}$ direction. Then, since $u=X_{n+1}$ on $\Sigma$, we have
\begin{align*}
\nabla_i\nabla_j \eta&=\sum_{\alpha=1}^{n} \frac{\partial \varphi}{\partial X_{\alpha}} \nabla_i\nabla_j X_\alpha+\sum_{\alpha,\gamma=1}^{n} \frac{\partial^2\varphi}{\partial X_\alpha\partial X_\gamma} \nabla_iX_\alpha\nabla_j X_\gamma - \nabla_i\nabla_jX_{n+1} \\
&=\sum_{\alpha=1}^{n} \frac{\partial \varphi}{\partial X_\alpha}h_{ij}\nu_{\alpha}+\sum_{\alpha,\gamma=1}^{n}\frac{\partial^2\varphi}{\partial X_{\alpha} \partial X_{\gamma}} \nabla_i X_\alpha \nabla_j X_\gamma - h_{ij}\nu_{n+1}
\end{align*} from which it follows that
\begin{equation}
F^{ii}\nabla_{ii}\eta \geq \left(\sum_{\alpha=1}^{n} \frac{\partial \varphi}{\partial X_\alpha}\nu_\alpha-\nu_{n+1}\right)F^{ii}h_{ii} \geq -C. \label{differentiate the cutoff twice}
\end{equation}

Substituting all these back into \eqref{k=2 2nd critical 1}, we get
\begin{gather} \label{k=2 2nd critical 2}
\begin{split}
0 \geq & 2\sum_{i \neq 1} \frac{F^{ii}-F^{11}}{\kappa_1-\kappa_i} \frac{h_{11i}^2}{\kappa_1}- \sum_{i=1}^{n} F^{ii}\frac{h_{11i}^2}{\kappa_{1}^2}+\sum_{i=1}^{n} F^{ii}\kappa_{i}^2+\alpha \sum_{i=1}^{n} F^{ii}\\
& -\beta \sum_{i=1}^{n} F^{ii}\frac{|\nabla_i \eta|^2}{\eta^2} - \frac{C\beta}{\eta} -C\kappa_1-C.
\end{split}
\end{gather} Let $\delta>0$ be a number whose value is to be determined. We proceed by considering two cases.

\bigskip

\textbf{Case 1: $\kappa_n \leq -\delta \kappa_1$.}

In this case, we may use the fact that $|\nabla \eta| \leq C$ and the first order critical condition \eqref{k=2 1st critical 1} to estimate
\begin{align*}
&\ \sum_{i=1}^{n} F^{ii}\frac{h_{11i}^2}{\kappa_{1}^2}+\beta \sum_{i=1}^{n} F^{ii}\frac{|\nabla_i \eta|^2}{\eta^2}\\
\leq &\ C\alpha^2\sum_{i=1}^{n} F^{ii}+\frac{C\beta(1+\beta)}{\eta^2}\sum_{i=1}^{n} F^{ii}.
\end{align*} On the other hand, since $\kappa_n \leq -\delta \kappa_1$, we have
\[\sum_{i=1}^{n} F^{ii}\kappa_{i}^2 \geq F^{nn}\kappa_{n}^2 \geq C(n,k) \delta^2 \kappa_{1}^2 \sum_{i=1}^{n} F^{ii}\] by lemma \ref{negative kappa}. Hence, the inequality \eqref{k=2 2nd critical 2} becomes
\begin{gather} \label{k=2 2nd critical 3}
\begin{split}
0 \geq & \left(C\delta^2 \kappa_{1}^2 -\frac{C\beta(1+\beta)}{\eta^2}-C\alpha^2\right)\sum_{i=1}^{n} F^{ii} - \frac{C\beta}{\eta}-C\kappa_1-C
\end{split}
\end{gather} and a bound for $\eta^{b}\kappa_1$ at $X_0$ follows from this.

\bigskip

\textbf{Case 2: $\kappa_n \geq -\delta \kappa_1$.}

This time, in order to handle the negative third order terms, we partition the indices $\{1,\ldots,n\}$ into
\[I=\{j: F^{jj} \leq \theta^{-1} F^{11}\}, \quad J=\{j: F^{jj}>\theta^{-1} F^{11}\}\] for some $\theta>0$ to be determined. Again, we use the first critical condition \eqref{k=2 1st critical 1} and the fact that $|\nabla \eta| \leq C$ to estimate
\begin{gather} \label{estimate of the third order terms}
\begin{split}
&\ \sum_{i=1}^{n} F^{ii}\frac{h_{11i}^2}{\kappa_{1}^2}+\beta \sum_{i=1}^{n} F^{ii}\frac{|\nabla_i \eta|^2}{\eta^2}\\
= &\ \left(\sum_{I}+\sum_{J}\right) F^{ii}\frac{h_{11i}^2}{\kappa_{1}^2}+\beta \left(\sum_{I}+\sum_{J}\right) F^{ii}\frac{|\nabla_i \eta|^2}{\eta^2} \\
\leq &\ \left(\sum_{I} C\beta^3 F^{ii}\frac{|\nabla_i\eta|^2}{\eta^2} + \frac{C\alpha^2}{\beta} F^{ii}\right)+\sum_{J} F^{ii}\frac{h_{11i}^2}{\kappa_{1}^2} \\
     &\ +\beta \sum_{I} F^{ii} \frac{|\nabla_i\eta|^2}{\eta^2} + \left(\sum_{J} \frac{C}{\beta} F^{ii}\frac{h_{11i}^2}{\kappa_{1}^2}+\frac{C\alpha^2}{\beta}F^{ii} \right) \\
\leq &\ C(\beta+\beta^3)\sum_{I} F^{ii} \frac{|\nabla_i \eta|^2}{\eta^2}+\left(1+\frac{C}{\beta}\right)\sum_{J} F^{ii}\frac{h_{11i}^2}{\kappa_{1}^2}+\frac{C\alpha^2}{\beta}\sum_{i=1}^{n} F^{ii}\\
\leq &\ C(\beta+\beta^3)\frac{F^{11}}{\eta^2}+\left(1+\frac{C}{\beta}\right)\sum_{J} F^{ii}\frac{h_{11i}^2}{\kappa_{1}^2}+\frac{C\alpha^2}{\beta}\sum_{i=1}^{n} F^{ii},
\end{split}
\end{gather} where we have used the Cauchy's inequality
\[\frac{h_{11i}^2}{\kappa_{1}^2} \leq C\epsilon \alpha^2 + \frac{\beta^2}{C\epsilon}\frac{|\nabla_{i}\eta|^2}{\eta^2}\] with $\epsilon=1/\beta$.

With \eqref{estimate of the third order terms} at hand,  the inequality \eqref{k=2 2nd critical 2} becomes
\begin{gather} \label{k=2 2nd critical 4}
\begin{split}
0 \geq &\ 2\sum_{i \neq 1} \frac{F^{ii}-F^{11}}{\kappa_1-\kappa_i} \frac{h_{11i}^2}{\kappa_1}- \left(1+\frac{C}{\beta}\right)\sum_{J} F^{ii}\frac{h_{11i}^2}{\kappa_{1}^2}+\left(\alpha-\frac{C\alpha^2}{\beta}\right) \sum_{i=1}^{n} F^{ii}\\
& +F^{11}\kappa_{1}^2-\frac{C(\beta+\beta^3)}{\eta^2} F^{11} - \frac{C\beta}{\eta} -C\kappa_1-C.
\end{split}
\end{gather} Note that
\begin{align*}
&\ 2\sum_{i \neq 1} \frac{F^{ii}-F^{11}}{\kappa_1-\kappa_i} \frac{h_{11i}^2}{\kappa_1}- \left(1+\frac{C}{\beta}\right)\sum_{J} F^{ii}\frac{h_{11i}^2}{\kappa_{1}^2} \\
\geq &\ 2\sum_{J} \frac{F^{ii}-F^{11}}{\kappa_1-\kappa_i} \frac{h_{11i}^2}{\kappa_1}- \left(1+\frac{C}{\beta}\right)\sum_{J} F^{ii}\frac{h_{11i}^2}{\kappa_{1}^2} \\
\geq &\ \sum_{J} \left[2\cdot \frac{1-\theta}{1+\delta}-\left(1+\frac{C}{\beta}\right)\right] F^{ii}\frac{h_{11i}^2}{\kappa_{1}^2}
\end{align*} can be made non-negative if we choose appropriate values for the parameters $\delta, \theta, \beta$. Indeed, let 
\[\frac{C}{\beta}=\varepsilon, \quad \frac{1-\theta}{1+\delta} \geq 1-\varepsilon,\] and for $\beta \geq 1$ large enough so that $0 < \varepsilon \leq 1/3$, we have
\[2\cdot \frac{1-\theta}{1+\delta}-\left(1+\frac{C}{\beta}\right) \geq 1-3\varepsilon \geq 0.\] Consequently, we are left with
\[0 \geq \left(\alpha-\frac{C\alpha^2}{\beta}\right) \sum_{i=1}^{n} F^{ii}+F^{11}\kappa_{1}^2-\frac{C(\beta+\beta^3)}{\eta^2} F^{11} - \frac{C\beta}{\eta} -C\kappa_1-C.\]
Finally, there is only one troublesome term i.e. $-C\kappa_1$ to be handled, which arose from the interchanging formula \eqref{interchanging formula} and the twice differentiation \eqref{differentiate twice} of $\psi$. Indeed, by fixing $\beta \geq C\alpha^2$ is large enough, we have
\[\left(\alpha-\frac{C\alpha^2}{\beta}\right) \sum_{i=1}^{n} F^{ii} \geq C\alpha \sum_{i=1}^{n} F^{ii} \geq C\alpha \kappa_1\] and a bound for $\eta^\beta \kappa_1$ at $X_0$ is achieved by choosing a large enough $\alpha$.

The proof is now complete.
\end{proof}
Several remarks are in order.

\begin{remark} \label{cruciality of uniform convexity}
In \cite{Urbas}, Urbas used the additional assumption that the boundary data $\varphi$ is uniformly convex to obtain that
\begin{equation}
\sum_{i,j} F^{ij}\nabla_i\nabla_j \eta  \geq c_0\sum_{i=1}^{n} F^{ii}-C
\end{equation} instead of \eqref{differentiate the cutoff twice}. Then, by further invoking the property that
\begin{equation}
\sum_{i=1}^{n} F^{ii} \geq C\kappa_1, \label{the property}
\end{equation} Urbas was able to handle the term $-C\kappa_1$ with the term
\[\beta\frac{F^{ij}\nabla_i\nabla_j \eta}{\eta}\geq \frac{C\beta}{\eta} \kappa_1\] by fixing $\beta \geq 1$ large.
\end{remark}
\begin{remark}
Both our proof and the proof of Urbas cannot be generalized to $k \geq 3$ because the property \eqref{the property} holds only when $k=2$; see the demonstration in \cite[Remark at the end of page 315]{Urbas}. 
\end{remark}

By an almost identical proof, we obtain the following and also Theorem \ref{k=2}.
\begin{theorem}
Let $n \geq 3$ and let $\Omega \subseteq \bR^n$ be a convex $1$-admissible domain with a smooth boundary. Assume $\psi(x,u,Du) \in C^{2}(\overline{\Omega} \times \bR \times \bR^n)$ is positive and $\varphi \in C^2(\overline{\Omega})$ is spacelike. Suppose $u \in C^{4}(\Omega) \cap C^2(\overline{\Omega})$ is a $2$-admissible solution to 
\begin{align*}
       \sigma_2[u]&=\psi(x,u, Du)\quad  \text{in $\Omega$},\\
                u&=\varphi \quad\quad\quad\quad\quad     \text{on $\partial\Omega$}.
\end{align*}
Then the maximum principal curvature 
\[\kappa_{\max}(x):=\max_{1 \leq i \leq n} \kappa_{i}(x)\] of its graph satisfies
\[\max_{\Omega} \kappa_{\max}(x) \leq C\left(1+\max_{\partial \Omega} \kappa_{\max}(x)\right)\] for some $C>0$ depending on $n,\norm{u}_{C^1(\overline{\Omega})}, \norm{\psi}_{C^2(\cD)}$ and $\norm{\varphi}_{C^1(\overline{\Omega})}$, where
\[\cD:=\overline{\Omega} \times [\inf_{\Omega} u, \sup_{\Omega} u] \times \bR^n.\]
\end{theorem}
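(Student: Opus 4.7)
\medskip

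\textbf{Proof plan.} The plan is to mimic the Pogorelov-type argument of the previous theorem on the scalar curvature equation, but dropping the boundary cutoff $\eta^\beta$, since we are now allowing the maximum to be attained on $\partial\Omega$. Specifically, I would consider
\[\widetilde{W}(X,\xi)=h_{\xi\xi}\exp\!\left(\tfrac{\alpha}{2}|X|^2\right),\qquad X\in\Sigma,\ \xi\in T_X\Sigma,\ |\xi|=1,\]
with $\alpha>0$ to be chosen. If the maximum is attained at a boundary point, the desired inequality $\max_{\Omega}\kappa_{\max}\le C(1+\max_{\partial\Omega}\kappa_{\max})$ follows at once, since $|X|$ is controlled by $\mathrm{diam}(\Omega)$ and $\|u\|_{C^0}$, both of which are under control. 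So the core task is to produce an a priori bound on $\kappa_1$ at any interior maximum.

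Assume the maximum occurs at an interior $X_0$ with $\xi_0=e_1$, in an orthonormal frame diagonalizing the second fundamental form, with $\kappa_1\ge\cdots\ge\kappa_n$. Writing $W(X)=h_{11}\exp(\alpha|X|^2/2)$ and analyzing $\log W$ exactly as in the previous theorem, but with every occurrence of $\beta\nabla_i\eta/\eta$ and $\beta\nabla_{ii}\eta/\eta$ removed, I obtain the first order condition $h_{11i}/\kappa_1=-\alpha\langle X,e_i\rangle$ and, after contracting the second order condition with $F^{ij}=\partial\sigma_2^{1/2}/\partial h_{ij}$ and substituting the interchanging formula \eqref{interchanging formula}, the analog of \eqref{k=2 2nd critical 2}:
\[0\ge 2\sum_{i\ne 1}\frac{F^{ii}-F^{11}}{\kappa_1-\kappa_i}\frac{h_{11i}^2}{\kappa_1}-\sum_{i}F^{ii}\frac{h_{11i}^2}{\kappa_1^2}+\sum_{i}F^{ii}\kappa_i^2+\alpha\sum_{i}F^{ii}-C\kappa_1-C.\]
Here I have also used Euler's identity to write $\alpha F^{ii}h_{ii}\langle X,\nu\rangle=\alpha\psi^{1/2}\langle X,\nu\rangle$, which is bounded. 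The first critical condition gives $h_{11i}^2/\kappa_1^2\le C\alpha^2$ directly, so $\sum F^{ii}h_{11i}^2/\kappa_1^2\le C\alpha^2\sum F^{ii}$, making the third-order analysis much simpler than before; the concavity term $2(F^{ii}-F^{11})h_{11i}^2/[\kappa_1(\kappa_1-\kappa_i)]$ is non-negative by \cite[Lemma 2.2]{Urbas} and can be discarded.

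I would then split into the same two cases as before. In Case 1 ($\kappa_n\le-\delta\kappa_1$), Lemma \ref{negative kappa} gives $\sum F^{ii}\kappa_i^2\ge F^{nn}\kappa_n^2\ge C\delta^2\kappa_1^2\sum F^{ii}$, and combining with the $k=2$-specific property $\sum F^{ii}\ge C\kappa_1$ (noted in Remark~\ref{cruciality of uniform convexity}), the bad term $-C\kappa_1$ is absorbed, yielding $\kappa_1\le C(\alpha,\delta)$. In Case 2 ($\kappa_n\ge-\delta\kappa_1$), one keeps $F^{11}\kappa_1^2\ge 0$ and uses $\alpha\sum F^{ii}\ge C\alpha\kappa_1$ to swallow $-C\kappa_1$ after fixing $\alpha$ sufficiently large, again producing an a priori bound for $\kappa_1$. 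The main obstacle is precisely the term $-C\kappa_1$, which originates from the $-F\kappa_1^2/\kappa_1$ piece of the interchanging formula and from the estimate $\nabla_1\nabla_1\psi\ge-C(1+\kappa_1^2)$; without the cutoff $\eta^\beta$ available, one cannot mimic Urbas' trick of absorbing it against $\beta F^{ij}\nabla_{ij}\eta/\eta$. The resolution is the $k=2$ property $\sum F^{ii}\ge C\kappa_1$, which is exactly why the argument here (as in Urbas) does not extend to $k\ge 3$, consistent with the paper's earlier remarks.
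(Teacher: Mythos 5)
Your overall strategy---drop the cutoff $\eta^\beta$, handle the boundary maximum trivially, and re-run the interior calculation---is exactly the structure the paper intends by ``an almost identical proof.'' Case~1 is fine. But there is a genuine gap in your Case~2: you discard the concavity term
\[
2\sum_{i\ne 1}\frac{F^{ii}-F^{11}}{\kappa_1-\kappa_i}\frac{h_{11i}^2}{\kappa_1}
\]
entirely and bound the third-order term crudely by $\sum_i F^{ii}h_{11i}^2/\kappa_1^2\le C\alpha^2\sum_i F^{ii}$. This leaves you with $(\alpha-C\alpha^2)\sum_i F^{ii}$ in the differential inequality, which is \emph{negative} as soon as $\alpha>1/C$. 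Yet you then claim to fix $\alpha$ ``sufficiently large'' so that $\alpha\sum_i F^{ii}\ge C\alpha\kappa_1$ absorbs the bad $-C\kappa_1$; these two requirements on $\alpha$ are incompatible. Concretely, with $\sum F^{ii}\sim\kappa_1$ the leading coefficient of $\kappa_1$ in your inequality is $\bigl(c_0 + c_1\alpha - c_2\alpha^2 - C\bigr)$, a downward parabola in $\alpha$ whose maximum has no reason to exceed $0$ (the constants $c_0,c_1,c_2$ come from $\inf\psi$ and the gradient bound, while $C$ comes from $\|\psi\|_{C^2}$).

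The fix is precisely the index partition used in the paper's Case~2 of the Pogorelov version: set $I=\{j:F^{jj}\le\theta^{-1}F^{11}\}$, $J=\{j:F^{jj}>\theta^{-1}F^{11}\}$. For $j\in J$, keep the concavity term; since $\kappa_1-\kappa_j\le(1+\delta)\kappa_1$ in this case, one gets
\[
2\,\frac{F^{jj}-F^{11}}{\kappa_1-\kappa_j}\,\frac{h_{11j}^2}{\kappa_1}
\;\ge\;\frac{2(1-\theta)}{1+\delta}\,\frac{F^{jj}h_{11j}^2}{\kappa_1^2}
\;\ge\;\frac{F^{jj}h_{11j}^2}{\kappa_1^2}
\]
for $\theta,\delta$ small, and the $J$-part of the third-order term is cancelled outright. (Without the cutoff there is no $(1+C/\beta)$ loss factor, so this cancellation is in fact \emph{cleaner} than in the Pogorelov case.) For $j\in I$ one has $F^{jj}\le\theta^{-1}F^{11}$, so by the first-order condition
\[
-\sum_{j\in I}F^{jj}\frac{h_{11j}^2}{\kappa_1^2}\;\ge\;-Cn\theta^{-1}\alpha^2\,F^{11},
\]
which is a multiple of $F^{11}$ only, and is absorbed by $F^{11}\kappa_1^2$ once $\kappa_1^2\gtrsim\alpha^2\theta^{-1}$. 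After this, the remaining inequality reads $0\ge \tfrac12 F^{11}\kappa_1^2+\alpha\sum F^{ii}-C\kappa_1-C(\alpha)$, and now you are free to take $\alpha$ as large as needed, using $F^{11}\kappa_1^2\ge c_0\kappa_1$ and the $k=2$-specific property $\sum F^{ii}\ge c_1\kappa_1$, to obtain $\kappa_1\le C$. Your proposal cannot bypass the concavity-term/partition step.
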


\section{The $k$-curvature equation} \label{curvature estimate}
Although the curvature estimates are now known for $k=1,2,n$ due to the work of Bartnik-Simon \cite{Bartnik-Simon}, Bayard \cite{Bayard}, Delanoe \cite{Delanoe}, Urbas \cite{Urbas}, and most recently for $k=n-1,n-2$ due to the work of Ren-Wang \cite{Ren-Wang-1, Ren-Wang-2}, the question of whether the bound remains valid for $3 \leq k \leq n-3$ is still open to this date. For this direction of research, some partial progress has been obtained: In \cite{Huang}, Huang proved the curvature bound for the $k$-curvature equation $\sigma_k[u]=\psi(X,\tilde{w})$ when the right-hand side is convex in $\tilde{w}$ and 
\begin{equation}
\frac{\partial \psi^{1/k}(X,\tilde{w})}{\partial \tilde{w}} \cdot \tilde{w} \geq \psi^{1/k}(X,\tilde{w}) \quad \text{for fixed $X=(x,u(x))$}, \label{Huang's assumption}
\end{equation} where
\[\tilde{w}:=\frac{1}{w}=\frac{1}{\sqrt{1-|Du|^2}}.\] The approach of Huang may be appealing for some particular functions $\psi$, but the assumption excludes the very desirable case when e.g. $\psi=\psi(X)$ does not contain gradient terms at all. In \cite{Wang}, Z.~Wang obtained the bound for a general right-hand side $\psi=\psi(x,u,Du)$ and all $k$ if the solution graph is additionally $(k+1)$-convex.

In this section, we establish the curvature bound for admissible solutions to the $k$-curvature equation which additionally have semi-convex graphs. Since semi-convexity is weaker than $(k+1)$-convexity \cite[Lemma 2.13]{Bin-1}, our result generalizes the estimate due to Z.~Wang. Our proof adapts the arguments from a work of Lu \cite{Lu}, where the core ideas are inspired by the novel paper of Guan-Ren-Wang \cite{Guan-Ren-Wang}. 

\begin{theorem}
Let $\varphi \in C^2(\Omega) \cap C(\overline{\Omega})$ be spacelike and affine. Suppose $\psi(x,z,p) \in C^{\infty}(\overline{\Omega} \times \bR \times \bR^n)$ is positive. If $u \in C^4(\Omega) \cap C^2(\overline{\Omega})$ is a $k$-admissible solution to 
\begin{align*}
       \sigma_k[u]&=\psi(x,u, Du)\quad  \text{in $\Omega$},\\
                u&=\varphi \quad\quad\quad\quad\quad     \text{on $\partial\Omega$},
\end{align*}
and its graph $\Sigma$ is semi-convex i.e. the principal curvatures $\kappa=(\kappa_1,\ldots,\kappa_n)$ of $\Sigma$ satisfy
\[\kappa_{i}(x) \geq -K \quad \text{for all $x \in \overline{\Omega}$ and all $1 \leq i \leq n$},\] then the maximum principal curvature 
\[\kappa_{\max}(x):=\max_{1 \leq i \leq n} \kappa_i(x)\] of $\Sigma$ satisfies
\[\sup_{\Omega'}\kappa_{\max} \leq C(\Omega')\] for any $\Omega' \subset \subset \Omega$, where $C(\Omega')>0$ depends on $n, k, K, \theta, \norm{\psi}_{C^2(\cD)}, \inf_{\cD} \psi,$ and $\norm{\varphi}_{C^1(\overline{\Omega})}$; here $\theta$ is the constant in \eqref{global gradient estimate} and 
\[\cD:=\overline{\Omega} \times [\inf_{\Omega} u, \sup_{\Omega} u] \times \bR^n.\]
\end{theorem}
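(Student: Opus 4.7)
The plan is to adapt the Pogorelov type argument used for $k=2$ in Section \ref{k=2 proof}, replacing the concavity of the homogenized operator $\sigma_2^{1/2}$ with a finer handling of the third order terms that exploits the semi-convexity bound $\kappa_i \geq -K$. The guiding framework is the splitting technique of Guan-Ren-Wang \cite{Guan-Ren-Wang} as adapted by Lu \cite{Lu}. Consider the test function
$$\widetilde{W}(X,\xi) = \eta^{\beta} h_{\xi\xi}\exp\left(\tfrac{\alpha}{2}|X|^2\right), \quad \eta := \varphi - u,$$
with parameters $\alpha, \beta > 0$ to be fixed. At an interior maximum $X_0$ with direction $\xi_0$, choose a local orthonormal frame so that $\xi_0 = e_1$ and $\{h_{ij}\}$ is diagonal with $\kappa_1 \geq \cdots \geq \kappa_n$. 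Pass to $\log \widetilde{W}$, contract the second critical condition with $F^{ij} := \sigma_k^{ij}$, and apply the interchanging formula \eqref{interchanging formula}; since $\varphi$ is affine, $F^{ii}\nabla_{ii}\eta$ is controlled as in \eqref{differentiate the cutoff twice}, yielding an inequality of the schematic form
\begin{align*}
0 \geq{} & -\frac{F^{ij,rs}h_{ij1}h_{rs1}}{\kappa_1} + \sum_i F^{ii}\kappa_i^2 - \sum_i F^{ii}\frac{h_{11i}^2}{\kappa_1^2} \\
& -\beta\sum_i F^{ii}\frac{|\nabla_i\eta|^2}{\eta^2} + \alpha\sum_i F^{ii} - \frac{C\beta}{\eta} - C\kappa_1 - C.
\end{align*}

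The core of the proof is to absorb the negative third order term $-\sum F^{ii}h_{11i}^2/\kappa_1^2$ using the concavity contribution $-F^{ij,rs}h_{ij1}h_{rs1}/\kappa_1$. Partition the indices into $I = \{i : F^{ii} \leq A\,F^{11}\}$ and $J = \{i: F^{ii} > A\,F^{11}\}$ for a large parameter $A>0$. For $i\in I$, the first critical condition
$$\frac{h_{11i}}{\kappa_1} = -\beta\frac{\nabla_i\eta}{\eta} - \alpha\langle X,e_i\rangle,$$
coupled with a Cauchy inequality of weight $\varepsilon = 1/\beta$, dominates the $I$-part of the third order term by $C(\beta+\beta^3)F^{11}/\eta^2 + (C\alpha^2/\beta)\sum F^{ii}$, exactly as in Section \ref{k=2 proof}. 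For $i\in J$ one uses the classical Andrews-Gerhardt inequality
$$-F^{ij,rs}h_{ij1}h_{rs1} \geq 2\sum_{i\in J}\frac{F^{ii}-F^{11}}{\kappa_1-\kappa_i}h_{11i}^2,$$
valid for $\sigma_k$ on $\Gamma_k$. The semi-convexity hypothesis \eqref{semi-convexity assumption} enters precisely here: it forces $\kappa_1 - \kappa_i \leq \kappa_1 + K \leq (1+\varepsilon)\kappa_1$ once $\kappa_1 \geq K/\varepsilon$, while the definition of $J$ gives $F^{ii} - F^{11} \geq (1 - 1/A)F^{ii}$. Selecting $A$ large and $\varepsilon$ small so that $2(1-1/A)/(1+\varepsilon) \geq 1 + C/\beta$, the concavity term majorizes the $J$-part of the third order term.

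After these cancellations, what remains is an inequality of the form
$$0 \geq \sum_i F^{ii}\kappa_i^2 + \alpha\sum_i F^{ii} - \frac{C(\beta+\beta^3)}{\eta^2}F^{11} - C\kappa_1 - C.$$
Semi-convexity yields $\sum_i F^{ii}\kappa_i^2 \geq F^{11}\kappa_1^2 - CK^2\sum_i F^{ii}$, while the standard lower bound $\sum_i F^{ii} \geq c(n,k,\inf_{\cD}\psi)>0$ in $\Gamma_k$ absorbs $C\kappa_1$ once $\alpha$ is fixed sufficiently large. The coercive term $F^{11}\kappa_1^2$ then dominates $C(\beta+\beta^3)F^{11}/\eta^2$ whenever $\eta^{\beta/2}(X_0)\kappa_1(X_0)$ exceeds a controlled constant, which produces the desired bound on $\eta^{\beta}\kappa_1$ at $X_0$. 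Standard reduction from a Pogorelov type bound with affine boundary data $\varphi$ to the general interior estimate then completes the argument.

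The hard part is the concavity-versus-third-order contest on the set $J$: the constant $2(1-1/A)/(1+\varepsilon)$ must strictly beat the coefficient $1 + C/\beta$ introduced on the $I$-part by the Cauchy inequality with weight $1/\beta$. This is where the semi-convexity assumption is indispensable, as it is exactly what makes $\kappa_1 - \kappa_i$ comparable to $\kappa_1$; without it, the ratio $(F^{ii}-F^{11})/(\kappa_1 - \kappa_i)$ could degenerate and the negative third order contribution would dominate, leaving the argument inconclusive for $3 \leq k \leq n-1$.
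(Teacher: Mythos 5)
Your proposal takes a genuinely different route from the paper, but it has a gap that I believe is fatal. The paper does not use the test function $\eta^\beta h_{\xi\xi}\exp(\alpha|X|^2/2)$ for $k\geq 3$; it adds an extra term and works with
\[W=\beta\log\eta+\log\kappa_{\max}+N\tilde{w}+\frac{\alpha}{2}|X|^2,\]
and the parameter $N$ in front of $\tilde{w}$ is essential. Here is why.

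After the interchanging formula and twice-differentiating $\psi$ you correctly arrive at a term of order $-C\kappa_1$ that must be absorbed. In the $k=2$ theorem of Section~\ref{k=2 proof} this is handled by the inequality $\sum_i F^{ii}\geq C\kappa_1$, i.e.\ \eqref{the property}, and the paper remarks explicitly that this property is \emph{specific to} $k=2$ and fails for $k\geq 3$. Your substitute --- ``the standard lower bound $\sum_i F^{ii}\geq c(n,k,\inf\psi)>0$ absorbs $C\kappa_1$ once $\alpha$ is fixed sufficiently large'' --- does not work: $\alpha\sum F^{ii}\geq\alpha c$ is a constant independent of $\kappa_1$, so it cannot dominate $-C\kappa_1$ as $\kappa_1\to\infty$. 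The other available coercive contribution, $F^{11}\kappa_1^2\geq\frac{k}{n}\sigma_k\kappa_1=\frac{k}{n}\psi\kappa_1$, has coefficient $\frac{k}{n}\psi$ tied to $\inf\psi$, which bears no a priori relation to the constant $C$ coming from $\|\psi\|_{C^2}$; if $\inf\psi$ is small relative to $\|\psi\|_{C^2}$, the remaining expression $\kappa_1(\frac{k}{n}\psi-C)$ is negative and the argument collapses. The point of the term $N\tilde{w}$ in the paper's test function is precisely to produce $N\sum F^{ii}\kappa_i^2\geq CNF^{11}\kappa_1^2\geq CN\frac{k}{n}\psi\kappa_1$ via Urbas's identities for $\tilde{w}_{ii}$, giving a tunable constant $N$ to beat $C\kappa_1$; this forces a delicate three-parameter tuning ($\beta$ large, $\varepsilon=C/\beta$ small of order $1/N^2$, $\alpha$ of order $N^3$) which your scheme never encounters.

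A secondary difference worth flagging: your handling of the concavity term relies on the Andrews--Gerhardt inequality for $\sigma_k^{1/k}$ together with the $I$/$J$ partition, paralleling Section~\ref{k=2 proof}. The paper instead works with $\sigma_k$ directly and invokes the much stronger concavity lemma of Lu/Guan--Ren--Wang (inequality~\eqref{Lu's inequality}) inside an iteration over $1\leq l<k$ that either forces all of $\kappa_1,\ldots,\kappa_k$ to be comparable (giving a direct bound via $\sigma_k\geq C\delta_2\cdots\delta_k\kappa_1^k$) or furnishes the extra positive term $(1-\epsilon)\,\sigma_k(\kappa|1)\,h_{111}^2/\kappa_1^3$, which is again used --- in combination with semi-convexity and the parameter $N$ --- to close the estimate. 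Your reasoning about the $J$-part (semi-convexity making $\kappa_1-\kappa_i\leq(1+\varepsilon)\kappa_1$ for large $\kappa_1$, so $2(1-1/A)/(1+\varepsilon)>1+C/\beta$) is a reasonable observation and does echo how semi-convexity enters the paper's estimate \eqref{the second line}, but by itself it is not enough to bypass the $-C\kappa_1$ obstruction. Finally, a small technical omission: at the maximum the top eigenvalue may have multiplicity $>1$, so one must differentiate $\kappa_{\max}$ in the viscosity sense, as the paper does via the Brendle--Choi--Daskalopoulos lemma; using $h_{\xi\xi}$ with a fixed $\xi$ requires an analogous regularization argument that should at least be stated.
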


\begin{proof}
Denote by 
\[\tilde{w}:=\frac{1}{w}=\frac{1}{\sqrt{1-|Du|^2}}\] and let 
\[\eta=\varphi-u.\]
The setting is exactly the same as in section \ref{k=2 proof} and we shall dive directly into the calculations. This time we use the following test function
\[W=\beta \log \eta + \log \kappa_{\max} + N\tilde{w} + \frac{\alpha}{2}|X|^2,\] where $\beta, N, \alpha>0$ are possibly large constants to be chosen later. The key difference here is that we are going to utilize almost all available positive terms which were plausibly omitted in section \ref{k=2 proof}.

Suppose $W$ attains its maximum at some interior point $X_0=(x_0,u(x_0))$. We may choose a local orthonormal frame $\{e_1,\ldots,e_n\}$ around $X_0$ such that the second fundamental form $h_{ij}=\kappa_i\delta_{ij}$ is diagonalized and 
\[\kappa_{\max}=\kappa_1 \geq \kappa_2 \geq \cdots \geq \kappa_n.\] In case $\kappa_1$ has multiplicity $m>1$ i.e.
\[\kappa_1=\kappa_2=\cdots=\kappa_m>\kappa_{m+1} \geq \cdots \geq \kappa_n,\] we may apply a smooth approximation lemma due to Brendle-Choi-Daskalopoulos \cite{BCD} to obtain that
\begin{align}
\delta_{kl}\cdot (\kappa_1)_i&=h_{kli}, \quad 1 \leq k,l \leq m, \label{approximation}\\
(\kappa_1)_{ii}&\geq h_{11ii}+2\sum_{p>m}\frac{h_{1pi}^2}{\kappa_1-\kappa_p}, \nonumber
\end{align} in the viscosity sense. Then at $x_0$, we have
\begin{align}
0&=\beta\frac{\nabla_i\eta}{\eta}+\frac{(\kappa_1)_i}{\kappa_1}+N\nabla_i \tilde{w}+\alpha\langle X,e_i\rangle=\beta \frac{\nabla_i\eta}{\eta}+\frac{h_{11i}}{\kappa_1}+N\nabla_i \tilde{w}+\alpha\langle X,\hat{e}_i\rangle, \label{1st critical}\\
0&\geq \beta \frac{\nabla_{ii}\eta}{\eta}-\beta \left(\frac{\nabla_i\eta}{\eta}\right)^2+\frac{(\kappa_1)_{ii}}{\kappa_1}-\frac{(\kappa_1)_{i}^{2}}{\kappa_{1}^2}+N\nabla_{ii}\tilde{w}+\alpha(1+h_{ii}\langle X,\nu \rangle) \nonumber\\
&\geq \beta \frac{\nabla_{ii}\eta}{\eta}-\beta \left(\frac{\nabla_i\eta}{\eta}\right)^2+\frac{h_{11ii}}{\kappa_1}+2\sum_{p>m}\frac{h_{1pi}^2}{\kappa_1(\kappa_1-\kappa_p)}-\frac{h_{11i}^2}{\kappa_{1}^2}+N\nabla_{ii}\tilde{w}+\alpha(1+h_{ii}\langle X,\nu \rangle). \label{2nd critical 1}
\end{align}

Contracting \eqref{2nd critical 1} with $F=\sigma_k$, we have
\begin{gather}\label{2nd critical 2}
\begin{split}
0 \geq &\ \sum_{i=1}^{n} \frac{F^{ii}h_{11ii}}{\kappa_1}+2\sum_{i=1}^{n}\sum_{p>m}\frac{F^{ii}h_{1pi}^2}{\kappa_1(\kappa_1-\kappa_i)}-\sum_{i=1}^{n} \frac{F^{ii}h_{11i}^2}{\kappa_{1}^2}\\
& + \frac{\beta}{\eta}\sum_{i=1}^{n} F^{ii}\nabla_{ii}\eta - \beta \sum_{i=1}^{n} F^{ii} \left(\frac{\nabla_i\eta}{\eta}\right)^2 +N \sum_{i=1}^{n} F^{ii}w_{ii}+\alpha \sum_{i=1}^{n} F^{ii}. 
\end{split}
\end{gather}
Now, in the Minkowski space, the interchanging formula reads
\begin{equation}
h_{11ii}=h_{ii11}+h_{11}h_{ii}^2-h_{11}^2h_{ii}, \label{interchanging formula 1}
\end{equation} and so
\[F^{ii}h_{11ii}=F^{ii}h_{ii11}+\kappa_1F^{ii}\kappa_{i}^2-\kappa_{1}^2F^{ii}\kappa_i.\] Differentiating the equation $F=\psi$ twice yields
\begin{equation}
\sum_{i=1}^{n} F^{ii}h_{ii1}=\nabla_1\psi \label{differentiate once}
\end{equation} and
\begin{equation}
\sum_{p,q,r,s}F^{pq,rs}h_{pq1}h_{rs1}+\sum_{i=1}^{n} F^{ii}h_{ii11}=\nabla_{1}\nabla_1\psi \geq -C(1+\kappa_{1}^2)
\end{equation}
The inequality \eqref{2nd critical 2} becomes
\begin{gather}\label{2nd critical 3}
\begin{split}
0 \geq &\ -\sum_{p,q,r,s} \frac{F^{pq,rs}h_{pq1}h_{rs1}}{\kappa_1}+\sum_{i=1}^{n} F^{ii}\kappa_{i}^2-C\kappa_1-C\\
&\ +2\sum_{i=1}^{n}\sum_{p>m}\frac{F^{ii}h_{1pi}^2}{\kappa_1(\kappa_1-\kappa_i)}-\sum_{i=1}^{n} \frac{F^{ii}h_{11i}^2}{\kappa_{1}^2}\\
&\ +\frac{\beta}{\eta}\sum_{i=1}^{n} F^{ii}\nabla_{ii}\eta - \beta \sum_{i=1}^{n} F^{ii} \left(\frac{\nabla_i\eta}{\eta}\right)^2+N\sum_{i=1}^{n} F^{ii}w_{ii}+\alpha \sum_{i=1}^{n} F^{ii}. 
\end{split}
\end{gather} We continue to expand the terms in \eqref{2nd critical 3}. First, we have
\begin{align*}
-\sum_{p,q,r,s}\frac{F^{pq,rs}h_{pq1}h_{rsq}}{\kappa_1}&=-\sum_{p,q}\frac{F^{pp,qq}h_{pp1}h_{qq1}}{\kappa_1}+\sum_{p,q} \frac{F^{pp,qq}h_{pq1}^2}{\kappa_1}\\
&\geq -\sum_{p,q}\frac{F^{pp,qq}h_{pp1}h_{qq1}}{\kappa_1}+2\sum_{i>m}\frac{F^{11,ii}h_{11i}^2}{\kappa_1}\\
&=-\sum_{p,q}\frac{F^{pp,qq}h_{pp1}h_{qq1}}{\kappa_1}+2\sum_{i>m}\frac{F^{ii}-F^{11}}{\kappa_1(\kappa_1-\kappa_i)}h_{11i}^2.
\end{align*} On the other hand, we have
\begin{align*}
2\sum_{i=1}^{n}\sum_{p>m}\frac{F^{ii}h_{1pi}^2}{\kappa_1(\kappa_1-\kappa_p)}&\geq 2\sum_{p>m}\frac{F^{pp}h_{1pp}^2}{\kappa_1(\kappa_1-\kappa_p)}+2\sum_{p>m}\frac{F^{11}h_{1p1}^2}{\kappa_1(\kappa_1-\kappa_p)}\\
&=2\sum_{i>m} \frac{F^{ii}h_{ii1}^2}{\kappa_1(\kappa_1-\kappa_i)}+2\sum_{i>m}\frac{F^{11}h_{11i}^2}{\kappa_1(\kappa_1-\kappa_i)}.
\end{align*} Substituting these back into \eqref{2nd critical 3}, we have
\begin{gather} \label{2nd critical 4}
\begin{split}
0\geq &-\sum_{p,q} \frac{F^{pp,qq}h_{pp1}h_{qq1}}{\kappa_1}-\frac{F^{11}h_{111}^2}{\kappa_{1}^2}+2\sum_{i>m}\frac{F^{ii}h_{ii1}^2}{\kappa_1(\kappa_1-\kappa_i)}-C\kappa_1-C\\
&+2\sum_{i>m}\frac{F^{ii}-F^{11}}{\kappa_1(\kappa_1-\kappa_i)}h_{11i}^2+2\sum_{i>m}\frac{F^{11}h_{11i}^2}{\kappa_1(\kappa_1-\kappa_i)}-\sum_{i \neq 1} \frac{F^{ii}h_{11i}^2}{\kappa_{1}^2}\\
&+\frac{\beta}{\eta}\sum_{i=1}^{n} F^{ii}\nabla_{ii}\eta - \beta \sum_{i=1}^{n} F^{ii} \left(\frac{\nabla_i\eta}{\eta}\right)^2+\sum_{i=1}^{n} F^{ii}\kappa_{i}^2 +N\sum_{i=1}^{n} F^{ii}\tilde{w}_{ii}+ \alpha \sum_{i=1}^{n} F^{ii}.
\end{split}
\end{gather} We now look at the terms involving $\eta$. By the same calculation as in \eqref{differentiate the cutoff twice}, we have
\[\sum_{i=1}^{n} F^{ii} \nabla_{ii}\eta \geq -C.\] Also, invoking $|\nabla \eta| \leq C$ and the first critical condition \eqref{1st critical}, we have
\begin{align*}
\beta \sum_{i=1}^{n} F^{ii} \left(\frac{\nabla_i\eta}{\eta}\right)^2&=\beta F^{11} \left(\frac{\nabla_1\eta}{\eta}\right)^2 + \beta \sum_{i \neq 1} F^{ii} \left(\frac{\nabla_i\eta}{\eta}\right)^2 \\
&\leq \frac{C\beta}{\eta^2}F^{11} + \sum_{i \neq 1} \left(\frac{C}{\beta} F^{ii}\frac{h_{11i}^2}{\kappa_{1}^2} + \frac{CN^2}{\beta} F^{ii} (\nabla_i\tilde{w})^2+\frac{C\alpha^2}{\beta} F^{ii}\right).
\end{align*} That is, \eqref{2nd critical 4} reduces to
\begin{gather} \label{2nd critical 5}
\begin{split}
0\geq &-\sum_{p,q} \frac{F^{pp,qq}h_{pp1}h_{qq1}}{\kappa_1}-\frac{F^{11}h_{111}^2}{\kappa_{1}^2}+2\sum_{i>m}\frac{F^{ii}h_{ii1}^2}{\kappa_1(\kappa_1-\kappa_i)}\\
&+2\sum_{i>m}\frac{F^{ii}-F^{11}}{\kappa_1(\kappa_1-\kappa_i)}h_{11i}^2+2\sum_{i>m}\frac{F^{11}h_{11i}^2}{\kappa_1(\kappa_1-\kappa_i)}-\left(1+\frac{C}{\beta}\right)\sum_{i \neq 1} \frac{F^{ii}h_{11i}^2}{\kappa_{1}^2}\\
&+\left(N\sum_{i=1}^{n} F^{ii}\tilde{w}_{ii}-\frac{CN^2}{\beta}\sum_{i=1}^{n} F^{ii} \tilde{w}_{i}^2 \right)+ \left(\alpha -\frac{C\alpha^2}{\beta}\right)\sum_{i=1}^{n} F^{ii}\\
&+\left(F^{11}\kappa_{1}^2-\frac{C\beta}{\eta^2}F^{11}\right)-\frac{C\beta}{\eta}-C\kappa_1-C.
\end{split}
\end{gather} By choosing $\beta>0$ large enough, we may assume the number
\[\varepsilon:=\frac{C}{\beta}\] is sufficiently small. Also, recall the following formulas \cite[(3.1) and (3.13)]{Urbas-CAG},
\begin{align*}
F^{ij}\nabla_i\tilde{w}\nabla_j\tilde{w}& \leq |Du|^2\tilde{w}^2F^{ij}h_{ik}h_{jk}, \\
F^{ij}\nabla_i\nabla_j\tilde{w}&=\tilde{w}F^{ij}h_{im}h_{jm}+\langle \nabla \psi, \epsilon_{n+1}\rangle,
\end{align*} where $\epsilon_{n+1}$ is the $(n+1)$-th standard coordinate in $\bR^{n+1}$. We are left with
\begin{gather} \label{2nd critical 6}
\begin{split}
0\geq &-\sum_{p,q} \frac{F^{pp,qq}h_{pp1}h_{qq1}}{\kappa_1}-\frac{F^{11}h_{111}^2}{\kappa_{1}^2}+2\sum_{i>m}\frac{F^{ii}h_{ii1}^2}{\kappa_1(\kappa_1-\kappa_i)}\\
&+2\sum_{i>m}\frac{F^{ii}-F^{11}}{\kappa_1(\kappa_1-\kappa_i)}h_{11i}^2+2\sum_{i>m}\frac{F^{11}h_{11i}^2}{\kappa_1(\kappa_1-\kappa_i)}-\left(1+\varepsilon\right)\sum_{i \neq 1} \frac{F^{ii}h_{11i}^2}{\kappa_{1}^2}\\
&+CN\sum_{i=1}^{n} F^{ii}\kappa_{i}^2+ C\alpha\sum_{i=1}^{n} F^{ii} -\frac{C\beta}{\eta}-C\kappa_1-CN-C,\\
\end{split}
\end{gather} where we have also assumed $\eta^2\kappa_{1}^2 \geq C\beta$ is sufficiently large. It remains only to handle the first two lines. Indeed, the second line could be easily handled as
\[h_{11i}=h_{1i1}=\delta_{1i} \cdot (\kappa_{1})_{1} = 0, \quad 1<i \leq m \quad \text{by \eqref{approximation}}\] and so
\begin{gather} \label{the second line}
\begin{split}
&\ 2\sum_{i>m}\frac{F^{ii}-F^{11}}{\kappa_1(\kappa_1-\kappa_i)}h_{11i}^2+2\sum_{i>m}\frac{F^{11}h_{11i}^2}{\kappa_1(\kappa_1-\kappa_i)}-\left(1+\varepsilon\right)\sum_{i \neq 1} \frac{F^{ii}h_{11i}^2}{\kappa_{1}^2} \\
=&\ 2\sum_{i>m} \frac{F^{ii}h_{11i}^2}{\kappa_1(\kappa_1-\kappa_i)}-(1+\varepsilon)\sum_{i>m} \frac{F^{ii}h_{11i}^2}{\kappa_{1}^2} \\
=&\ \sum_{i>m} \frac{F^{ii}h_{11i}^2}{\kappa_{1}^2(\kappa_1-\kappa_i)} \left[2\kappa_1-(1+\varepsilon)(\kappa_1-\kappa_i)\right]\\
=&\ \sum_{i>m} \frac{F^{ii}h_{11i}^2}{\kappa_{1}^2(\kappa_1-\kappa_i)}\left[(1-\varepsilon)\kappa_1+(1+\varepsilon)\kappa_i\right]
\end{split}
\end{gather} is non-negative by semi-convexity $\kappa_{i} \geq -K$. Hence,
\begin{gather} \label{2nd critical 7}
\begin{split}
0\geq &-\sum_{p,q} \frac{F^{pp,qq}h_{pp1}h_{qq1}}{\kappa_1}-\frac{F^{11}h_{111}^2}{\kappa_{1}^2}+2\sum_{i>m}\frac{F^{ii}h_{ii1}^2}{\kappa_1(\kappa_1-\kappa_i)}\\
&+CN\sum_{i=1}^{n} F^{ii}\kappa_{i}^2+ C\alpha\sum_{i=1}^{n} F^{ii} -\frac{C\beta}{\eta}-C\kappa_1-CN-C.\\
\end{split}
\end{gather}

However, the major difficulty is to deal with the first line in \eqref{2nd critical 6} and for that purpose, we are going to apply some novel ideas from the work of Guan-Ren-Wang \cite{Guan-Ren-Wang}. First, we recall a concavity lemma for the operator $\sigma_k$ and a proof of which can be found in \cite{Lu}.
\begin{lemma}
Let $\kappa=(\kappa_1,\ldots,\kappa_n) \in \Gamma_k$ be ordered as $\kappa_1 \geq \cdots \geq \kappa_n$ and $1 \leq l<k$. For any $\epsilon,\delta,\delta_0 \in (0,1)$, there exists some $\delta'>0$ such that if $\kappa_l \geq \delta \kappa_1$ and $\kappa_{l+1} \leq \delta' \kappa_1$, then we have
\begin{equation}
-\sum_{p,q} \frac{\sigma_{k}^{pp,qq} \xi_p\xi_q}{\sigma_k}+\frac{1}{\sigma_{k}^2}\left(\sum_{i=1}^{n}\sigma_{k}^{ii}\xi_{i}\right)^2 \geq (1-\epsilon)\frac{\xi_{1}^2}{\kappa_{1}^2}-\delta_0\sum_{i>l} \frac{\sigma_{k}^{ii}\xi_{i}}{\kappa_1\sigma_k} \label{Lu's inequality}
\end{equation} for an arbitrary vector $\xi=(\xi_1,\ldots,\xi_n) \in \bR^n$.
\end{lemma}
Then, we proceed by an iteration argument that is very delicate; one could see applications of this new technique in some other settings \cite{Bin-1, Chu, Yang}. We may still let $\varepsilon>0$ denote a very small number without causing any confusion, and let $\delta_{0}=1/2$. Pick an arbitrary $\delta_1 \in (0,1)$, say $\delta_1=1/3$, we would then trivially have $\kappa_1 \geq \delta_1\kappa_1$. Now, by the lemma, there exists some $\delta_2>0$ such that if $\kappa_2 \leq \delta_2\kappa_1$, then the inequality \eqref{Lu's inequality} holds. If we had $\kappa_2 \geq \delta_2 \kappa_1$, then we may continue to pick some $\delta_3$ and see if we would have $\kappa_3 \leq \delta_3 \kappa_1$. The key argument is that this process either halts at some $1 \leq l <k$, or it goes on and we have $\kappa_k > \delta_k \kappa_1$. We now analyze the two cases.

\bigskip

\textbf{Case 1: For each $1 \leq i \leq k$ there exists some $\delta_i>0$ such that $\kappa_i \geq \delta_i\kappa_1$.}

In this case, we immediately have
\begin{align*}
\sigma_k&=\sum_{1 \leq i_1<\cdots<i_k\leq n} \kappa_{i_1}\cdots\kappa_{i_k} \\
&\geq \kappa_1\cdots\kappa_k-C(n,k)\kappa_1\cdots\kappa_{k-1}\cdot K \\
&\geq \kappa_1\cdots\kappa_{k-1}(\kappa_k-CK) \\
&\geq C\delta_2\cdots\delta_{k}\kappa_{1}^{k}
\end{align*} and the desired estimate follows.

\bigskip

\textbf{Case 2: The process halts at some $1 \leq l <k$ and we have \eqref{Lu's inequality}.}

In this case, taking $\xi=(h_{ii1})$ and applying \eqref{differentiate once} and \eqref{approximation}, the inequality \eqref{Lu's inequality} yields
\begin{align*}
&\ -\sum_{p,q} \frac{F^{pp,qq}h_{pp1}h_{qq1}}{\kappa_1}-\frac{F^{11}h_{111}^2}{\kappa_{1}^2}+2\sum_{i>m}\frac{F^{ii}h_{ii1}^2}{\kappa_1(\kappa_1-\kappa_i)}\\
\geq &\ -\frac{(\nabla_1\psi)^2}{\kappa_1\psi}+(1-\varepsilon)\sigma_k\frac{h_{111}^2}{\kappa_{1}^3}-\frac{F^{11}h_{111}^2}{\kappa_{1}^2}+2\sum_{i>m}\frac{F^{ii}h_{ii1}^2}{\kappa_1(\kappa_1-\kappa_i)}-\frac{1}{2}\sum_{i>l}\frac{F^{ii}h_{ii1}^2}{\kappa_{1}^2} \\
\geq &\ -C\kappa_1+(1-\varepsilon)[F^{11}\kappa_1+\sigma_{k}(\kappa|1)]\frac{h_{111}^2}{\kappa_{1}^3}-\frac{F^{11}h_{111}^2}{\kappa_{1}^2}+\sum_{i>\max\{m,l\}}\frac{F^{ii}h_{ii1}^2}{\kappa_{1}^2(\kappa_1-\kappa_i)}(3\kappa_1+\kappa_i)\\
\geq &\ -C\kappa_1-\varepsilon \frac{F^{11}h_{111}^2}{\kappa_{1}^2}+(1-\varepsilon)\sigma_{k}(\kappa|1)\frac{h_{111}^2}{\kappa_{1}^3}.
\end{align*} Therefore,
\begin{gather} \label{2nd critical 8}
\begin{split}
0\geq &\frac{CN}{2}F^{11}\kappa_{1}^2-\varepsilon \frac{F^{11}h_{111}^2}{\kappa_{1}^2}\\
&+ C\alpha\sum_{i=1}^{n} F^{ii} + (1-\varepsilon)\sigma_{k}(\kappa|1)\frac{h_{111}^2}{\kappa_{1}^3}\\
&+\frac{CN}{2}F^{11}\kappa_{1}^2 -\frac{C\beta}{\eta}-C\kappa_1-CN-C.\\
\end{split}
\end{gather}
Still, using the first order critical condition \eqref{1st critical} and the fact that $\kappa_i \geq -K$, we can estimate
\begin{align*}
(1-\varepsilon)\sigma_k(\kappa|1)\frac{h_{111}^2}{\kappa_{1}^3} &\geq -C\kappa_2\cdots\kappa_k \cdot K \cdot \frac{1}{\kappa_1}\cdot \left(\frac{C\beta^2}{\eta^2}+CN^2\kappa_{1}^2+C\alpha^2\right)\\
&\geq -CKN^2\kappa_1\cdots \kappa_k
\end{align*} and 
\begin{align*}
-\varepsilon \frac{F^{11}h_{111}^2}{\kappa_{1}^2}&\geq -\varepsilon F^{11}\left(\frac{C\beta^2}{\eta^2}+CN^2\kappa_{1}^2+C\alpha^2\right) \\
&\geq -\varepsilon CN^2F^{11}\kappa_{1}^2
\end{align*} by assuming $\eta^2\kappa_{1}^2$ is large. Thus,
\[\frac{CN}{2}F^{11}\kappa_{1}^2-\varepsilon \frac{F^{11}h_{111}^2}{\kappa_{1}^2} \geq (CN-\varepsilon CN^2)F^{11}\kappa_{1}^2 \geq 0\] by taking $\varepsilon$ small enough e.g. of order $C/N^2$. On the other hand,
\begin{align*}
C\alpha\sum_{i=1}^{n} F^{ii} + (1-\varepsilon)\sigma_{k}(\kappa|1)\frac{h_{111}^2}{\kappa_{1}^3} &\geq C\alpha\sigma_{k-1}-CKN^2\kappa_1\cdots \kappa_k  \\
&\geq \left(C\alpha-CKN^2 \kappa_k\right)\kappa_1\cdots\kappa_{k-1}.
\end{align*} If $C\alpha-CKN^2\kappa_k \geq 0$, then we are done. Otherwise, taking $\alpha$ to be at least of order $N^3$, we would have $\kappa_k \geq \frac{C\alpha}{KN^2}=CN$ and so
\[\sigma_k \geq \kappa_1\cdots \kappa_k-C\kappa_1\cdots\kappa_{k-1}\cdot K \geq \kappa_1\cdots\kappa_{k-1}(CN-C)\geq CN\kappa_1\] by choosing a large $N$.

Finally, we can remove the first two lines from \eqref{2nd critical 8} and since
\[F^{11}\kappa_{1}^2=\sigma_{k-1}(\kappa|1)\kappa_{1}^2 \geq \frac{k}{n}\sigma_k\kappa_1,\] the estimate follows by choosing $N$ large.

\end{proof}

By an almost identical proof, Theorem \ref{semi-convex} follows. Moreover, we could also provide an alternative proof for Ren-Wang's curvature estimates when $k=n-1$ \cite[Theorem 5]{Ren-Wang-1} and $k=n-2$ \cite[Theorem 7]{Ren-Wang-2}.
\begin{theorem}
Let $k=n-1, n \geq 3$, or $k=n-2, n \geq 5$, and let $\Omega \subseteq \bR^n$ be a convex $(k-1)$-admissible domain with a smooth boundary. Assume $\psi(x,u,Du) \in C^{2}(\overline{\Omega} \times \bR \times \bR^n)$ is positive and $\varphi \in C^2(\overline{\Omega})$ is spacelike. Suppose $u \in C^{4}(\Omega) \cap C^2(\overline{\Omega})$ is a $k$-admissible solution to
\begin{align*}
       \sigma_k[u]&=\psi(x,u, Du)\quad  \text{in $\Omega$},\\
                u&=\varphi \quad\quad\quad\quad\quad     \text{on $\partial\Omega$}.
\end{align*}
Then the maximum principal curvature 
\[\kappa_{\max}(x):=\max_{1 \leq i \leq n} \kappa_{i}(x)\] of its graph satisfies
\[\max_{\Omega} \kappa_{\max}(x) \leq C\left(1+\max_{\partial \Omega} \kappa_{\max}(x)\right)\] for some $C>0$ depending on $n,\norm{u}_{C^1(\overline{\Omega})}, \norm{\psi}_{C^2(\cD)}$ and $\norm{\varphi}_{C^1(\overline{\Omega})}$, where
\[\cD:=\overline{\Omega} \times [\inf_{\Omega} u, \sup_{\Omega} u] \times \bR^n.\]
\end{theorem}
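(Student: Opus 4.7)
The plan is to adapt the proof of the previous theorem on the semi-convex case almost verbatim, using Lemma \ref{negative kappa} as a scale-invariant substitute for the assumption \eqref{semi-convexity assumption}. The key point is that any $\kappa \in \Gamma_k$ automatically obeys $\kappa_j \geq -\tfrac{n-k}{k}\kappa_1$ at every non-positive eigenvalue, and the ratio $\tfrac{n-k}{k}$ is strictly less than $1$ precisely when $2k > n$, which matches the stated dimensional range ($n \geq 3$ for $k = n-1$ and $n \geq 5$ for $k = n-2$).

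I would set up the same test function
\[
W = \beta\log\eta + \log\kappa_{\max} + N\tilde{w} + \frac{\alpha}{2}|X|^2
\]
at an interior maximum point $X_0$, diagonalize the second fundamental form in a suitable frame with $\kappa_1 \geq \cdots \geq \kappa_n$, apply the Brendle-Choi-Daskalopoulos smoothing, and carry out the contraction against $F^{ii} = \sigma_k^{ii}$. All steps producing the critical conditions and the inequality analogous to \eqref{2nd critical 6} are identical to those in the previous proof.

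The first real adaptation occurs in establishing the non-negativity of the second line of \eqref{2nd critical 6}. Whenever $\kappa_i \leq 0$, Lemma \ref{negative kappa} yields
\[
(1-\varepsilon)\kappa_1 + (1+\varepsilon)\kappa_i \;\geq\; \Bigl[(1-\varepsilon) - (1+\varepsilon)\tfrac{n-k}{k}\Bigr]\kappa_1,
\]
which is non-negative for small enough $\varepsilon > 0$ iff $\tfrac{n-k}{k} < 1$. This is exactly where the dimensional hypotheses enter, replacing the role that the constant $K$ played in the semi-convex proof. The iteration-and-dichotomy argument around \eqref{Lu's inequality} then proceeds unchanged, and in Case 1 one still obtains $\sigma_k \geq c\kappa_1^k$ after controlling the lower eigenvalues via Lemma \ref{negative kappa} in place of $K$.

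The main obstacle, as I anticipate it, lies in Case 2, where the term $(1-\varepsilon)\sigma_k(\kappa|1)\,h_{111}^2/\kappa_1^3$ must be absorbed by the positive quantities. In the semi-convex proof, $|\sigma_k(\kappa|1)| \lesssim K \kappa_2\cdots\kappa_k$ with the fixed constant $K$, so the absorption succeeded after taking $\alpha \sim N^3$. Here Lemma \ref{negative kappa} yields only $|\sigma_k(\kappa|1)| \lesssim \kappa_1 \cdot \kappa_2 \cdots \kappa_{k-1}$, which carries an extra factor of $\kappa_1$. Compensating for this factor requires exploiting the identity $F^{11}\kappa_1^2 \geq \tfrac{k}{n}\sigma_k\,\kappa_1$ from Lemma \ref{basic properties} in conjunction with the generous term $CN F^{11}\kappa_1^2$, and readjusting the parameter hierarchy $\varepsilon \sim 1/N^2$, $\beta \sim N^2$, $\alpha \sim N^3$ accordingly. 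This bookkeeping is the technical crux; the condition $2k > n$ is precisely what ensures the resulting balance can be closed, mirroring at the algebraic level the role played by the Ren-Wang concavity inequalities in their original arguments.
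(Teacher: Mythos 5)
Your treatment of the second line is correct and coincides with the paper's: by Lemma \ref{negative kappa}, $\kappa_i\geq -\tfrac{n-k}{k}\kappa_1$, so $(1-\varepsilon)\kappa_1+(1+\varepsilon)\kappa_i\geq\bigl(\tfrac{2k-n}{k}-\tfrac{n}{k}\varepsilon\bigr)\kappa_1\geq 0$ once $2k>n$ and $\varepsilon<(2k-n)/n$. But your treatment of the \emph{first} line of \eqref{2nd critical 6} is not what the paper does, and I do not think it can be made to work. You propose to rerun the Lu-lemma iteration/dichotomy with $K$ replaced by the scale-invariant bound $\tfrac{n-k}{k}\kappa_1$. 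In Case 2 that substitution turns the estimate $(1-\varepsilon)\sigma_k(\kappa|1)\tfrac{h_{111}^2}{\kappa_1^3}\geq -CKN^2\kappa_1\cdots\kappa_k$ into one with an extra factor of $\kappa_1$, i.e.\ a lower bound of order $-CN^2\kappa_1^2\kappa_2\cdots\kappa_k$. The positive terms you invoke to absorb this, namely $CN F^{11}\kappa_1^2\geq\tfrac{k}{n}\,CN\sigma_k\kappa_1\sim N\kappa_1$ and $C\alpha\sum F^{ii}\sim\alpha\sigma_{k-1}$, are both at least one power of $\kappa_1$ short, and no reassignment of the parameter hierarchy $\varepsilon,\beta,N,\alpha$ fixes a deficit that scales with the unknown $\kappa_1$. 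You acknowledge this is the ``technical crux'' but offer no argument for why the balance closes, and $2k>n$ alone does not supply one.

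The paper instead bypasses the iteration argument entirely. For $k=n-1$ and $k=n-2$ it invokes the Ren--Wang concavity inequality, quoted as \cite[Lemma 2.11]{Bin-1} and going back to \cite{Ren-Wang-1,Ren-Wang-2}, which bounds the whole first line in one stroke:
\[
-\sum_{p,q}\frac{F^{pp,qq}h_{pp1}h_{qq1}}{\kappa_1}-\frac{F^{11}h_{111}^2}{\kappa_1^2}+2\sum_{i>m}\frac{F^{ii}h_{ii1}^2}{\kappa_1(\kappa_1-\kappa_i)}\;\geq\;-C\frac{(\nabla_1\psi)^2}{\kappa_1}\;\geq\;-C\kappa_1.
\]
This term $-C\kappa_1$ is then absorbed by $CNF^{11}\kappa_1^2\gtrsim N\kappa_1$ exactly as in the semi-convex proof, with no Case 1/Case 2 split, no use of Lu's inequality, and no need to bound $\sigma_k(\kappa|1)h_{111}^2/\kappa_1^3$. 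The dimensional restriction $k\in\{n-1,n-2\}$ in the theorem is precisely the range where the Ren--Wang inequality is known; it is not merely the range where $\tfrac{n-k}{k}<1$. You should replace your Case 2 absorption step by a direct citation of this concavity inequality.
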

\begin{proof}
In this case, the first line in \eqref{2nd critical 6} can be easily handled because we have the very powerful concavity inequalities \cite[Lemma 2.11]{Bin-1} due to Ren-Wang \cite{Ren-Wang-1, Ren-Wang-2},
\[-\sum_{p,q} \frac{F^{pp,qq}h_{pp1}h_{qq1}}{\kappa_1}-\frac{F^{11}h_{111}^2}{\kappa_{1}^2}+2\sum_{i>m}\frac{F^{ii}h_{ii1}^2}{\kappa_1(\kappa_1-\kappa_i)} \geq -C\frac{(\nabla_1\psi)^2}{\kappa_{1}} \geq -C\kappa_1.\] Moreover, the second line \eqref{the second line} is still non-negative by lemma \ref{negative kappa}:
\[(1-\varepsilon)\kappa_1+(1+\varepsilon)\kappa_i \geq \left(\frac{2k-n}{k}-\frac{n}{k}\varepsilon\right)\kappa_1 \geq 0\] if $0<\varepsilon<(2k-n)/n$ and $2k>n$. Since all the other parts of the calculations remain exactly the same, the proof is complete.
\end{proof}

\begin{remark}
Instead of $\log \kappa_{\max}$, Ren-Wang used $\log \log \sum_{j=1}^{n} e^{\kappa_j}$ in their test function which may cause the calculations to be a lot more involved.
\end{remark}

\section*{Acknowledgements}
Part of this work was carried out while the author was visiting Professor Zhizhang Wang at Fudan University. The author would like to thank Professor Wang for the warm hospitality and for some fruitful discussions on the topic. The visit was financially supported by Professor Man-Chun Lee and the author would like to thank Professor Lee for the generosity. The author would also like to thank Professor Heming Jiao for pointing out a few mistakes in the calculations and for answering inquiries about the paper \cite{Guo-Jiao}.

\section*{Data Availability}
No data was used for the research described in the article.

\bibliography{refs}

\end{document}